\newtheorem{theorem}{Theorem}[section]
\newtheorem{fact}[theorem]{Fact}
\newtheorem{cor}[theorem]{Corollary}
\newtheorem{proposition}[theorem]{Proposition}
\newtheorem{lemma}[theorem]{Lemma}
\newtheorem{corollary}[theorem]{Corollary}
\newtheorem{question}[theorem]{Question}
\newtheorem{conjecture}[theorem]{Conjecture}
\theoremstyle{definition}
\newtheorem{defn}[theorem]{Definition}
\newtheorem{definition}[theorem]{Definition}
\newtheorem{example}[theorem]{Example}
\theoremstyle{remark}
\newtheorem{remark}[theorem]{Remark}
\newcommand{\la}{\langle}
\newcommand{\ra}{\rangle}
\newcommand{\CM}{{\cal M}}
\newcommand{\sub}{\subseteq}
\newcommand{\CK}{\cal K}
\newcommand{\CR}{{\cal R}}
\newcommand{\dcl}{\operatorname{dcl}}
\newcommand{\acl}{\operatorname{acl}}
\newcommand{\scl}{\operatorname{scl}}
\newcommand{\cl}{\operatorname{cl}}
\newcommand{\cld}{\operatorname{cl}^{\delta}}
\newcommand{\ldim}{\ensuremath{\textup{ldim}}}
\newcommand{\ddim}{\text{$\delta$-\textup{dim}}}
\newcommand{\bb}[1]{\ensuremath{\mathbb{#1}}}
\newcommand{\cal}[1]{\ensuremath{\mathcal{#1}}}
\newcommand{\Lrarr}{\ensuremath{\Leftrightarrow}}
\newcommand{\Rarr}{\ensuremath{\Rightarrow}}
\newcommand{\sm}{\setminus}
\newcommand{\R}{\mathbb{R}}
\title[Coincidence of dimensions in closed ordered differential fields]{Coincidence of dimensions in closed ordered differential fields}
\subjclass[2010]{03C98, 03C60}
 \keywords{Closed ordered differential fields, dense pairs of o-minimal structures, differential and large dimension}
\date{\today}
\begin{document}
\author {Pantelis  E. Eleftheriou}
\address{Pantelis  E. Eleftheriou, Department of Mathematics and Statistics, University of Konstanz, Box 216, 78457 Konstanz, Germany}
\email{panteleimon.eleftheriou@uni-konstanz.de}

\author{Omar Le\'on S\'anchez}
\address{Omar Le\'on S\'anchez\\
University of Manchester\\
Department of Mathematics\\
Oxford Road \\
Manchester, M13 9PL.}
\email{omar.sanchez@manchester.ac.uk}

\author {Nathalie Regnault}
\address{Nathalie Regnault, Department of Mathematics (De Vinci), UMons, 20, place du Parc 7000 Mons, Belgium}
\email{nathalie.regnault@umons.ac.be}

\thanks{The first author  was supported by a Research Grant from the German Research Foundation (DFG) and a Zukunftskolleg Research Fellowship.}

\begin{abstract} Let $\CK=\la\cal R, \delta\ra$ be a closed ordered differential field, in the sense of Singer \cite{singer}, and $C$ its  field of constants. In this note, we prove that, for sets definable in the pair $\cal M=\la \cal R, C\ra$, the $\delta$-dimension from \cite{bmr} and the large dimension from \cite{egh} coincide. As an application, we characterize the definable sets in $\CK$ that are internal to $C$ as those sets that are definable in $\cal M$ and have $\delta$-dimension $0$. We further show that, for sets definable in $\CK$, having $\delta$-dimension $0$ does not generally imply co-analyzability in $C$ (in contrast to the case of transseries). We also point out that the coincidence of dimensions also holds in the context of differentially closed fields and in the context of transseries.
\end{abstract}

\maketitle

\section{Introduction}

Pairs of fields have been extensively studied in model theory and arise naturally in various ways. If $\CK=\la \cal R, \delta\ra$ is a differentially closed field of characteristic zero (DCF$_0$), a closed ordered differential field (CODF) or the differential field of transseries $\mathbb T$ constructed in \cite{Asch}, and $C=\ker(\delta)$ is the field of constants in each case, then the reduct $\cal M=\la \cal R, C\ra$ is a pair of algebraically closed fields (\cite{keisler}, \cite{poizat}), a dense pair of real closed fields (\cite{vdd-dense}, \cite{rob}) or a tame pair (\cite{mac}, \cite{dl}), respectively. In all three cases, there is a natural notion of dimension, the \emph{differential} or \emph{$\delta$-dimension} for definable sets in $\CK$. While we postpone the definitions until \S\ref{preliminaries}, we do recall that in \cite[Corollay 5.27]{bmr} it is shown that, in the case of CODFs, the $\delta$-dimension on definable sets coincides with the one obtained from $\delta$-cell decomposition.


\medskip

In the above three cases (DCF$_{0}$, CODF, and transseries), the following implications hold for a set $X$ definable in $\CK$:
\begin{equation}
 \text{$X$ is internal to $C$}\,\,\, \Rarr\,\,\, \text{$X$ is co-analyzable in $C$} \,\,\, \Rarr\,\,\, \text{$X$ has $\delta$-dimension $0$}.\notag
\end{equation}
In the case of transseries, it is shown in \cite{adh} that the latter two properties are equivalent, and in \cite{dl} that, 
when restricted to definable sets in the pair $\cal M=\la \cal R, C\ra$, all three properties are equivalent. In this  note, we prove that  in the case of CODFs the latter two properties are different (see examples in \S \ref{example}), whereas when restricted to sets definable in $\cal M$ all three properties are  equivalent. The latter result is a consequence of Theorem \ref{main} below, which states that  in $\cal M$, the $\delta$-dimension coincides with the \emph{large dimension} from \cite{egh}, which we briefly describe next (and provide further details in \S\ref{preliminaries}).

\medskip

The primary example of a dense pair of real closed fields is that of $\la \overline{\mathbb R}, \mathbb Q^{rc}\ra$, where $\overline{\mathbb R}$ is the real field and $\mathbb Q^{rc}$ the subfield of real algebraic numbers. This pair was studied by A. Robinson in his classical paper \cite{rob}, where the decidability of its theory was proven. A systematic study of dense pairs $\la \cal B, \cal A\ra$ of  o-minimal structures (that is, $\cal A$ is a dense proper elementary substructure of \cal B) occurred much later in \cite{vdd-dense} by van den Dries. In \cite{egh} (as well as \cite{beg} and \cite{for}), the pregeometric dimension localized at $\cal A$ was studied. Namely, letting $A$ and $B$ be the underlying sets of $\cal A$ and $\cal B$, respectively, and $\dcl$ denote the usual definable closure in the o-minimal structure $\cal B$, we set, for $D\sub B$,
$$\scl(D)=\dcl(D\cup A).$$
For $X$ a definable set in $\la \cal B, \cal A\ra$, the $\scl$-dimension of $X$ is called \emph{large dimension} and is denoted by $\ldim(X)$. In \cite{egh},  in a much broader setting that includes dense pairs, the large dimension was given a topological description via a structure theorem,
much alike the topological description of the usual $\dcl$-dimension via the cell decomposition theorem in the o-minimal setting. 

\medskip


The main result of this note is the following theorem.


\begin{theorem}\label{main} Let $\cal K=\la \cal R, \delta\ra$ be a closed ordered differential field and $C$ its field of constants. Let $X$ be a set definable in the pair $\cal M=\la \cal R, C\ra$. Then
$$\text{$\ldim (X)= \delta$-$\dim (X)$.}$$
\end{theorem}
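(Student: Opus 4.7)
The plan is to prove the two inequalities $\delta$-$\dim(X) \leq \ldim(X)$ and $\delta$-$\dim(X) \geq \ldim(X)$ separately. The heart of the argument is a pointwise algebraic lemma combined with the structure theorem for large dimension from \cite{egh}.

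First I would establish the key algebraic lemma: for any $a \in \cal R$ and $B \subseteq \cal R$, if $a \in \scl(B)$, then $a$ is $\delta$-algebraic over $B$. Since $\cal R$ is real closed, $\scl(B) = \dcl_{\cal R}(B \cup C)$ coincides with the field-theoretic algebraic closure of $\mathbb Q(B \cup C)$ in $\cal R$, so $a$ satisfies an irreducible polynomial equation $P(a, \bar b, \bar c) = 0$ (with nonzero partial derivative in the first variable) for some $\bar b \in B$ and $\bar c \in C^k$. Let $L$ be the subfield of $\cal R$ generated by $B\la\ra \cup \{\bar c, a\}$, where $B\la\ra$ is the differential subfield of $\cal R$ generated by $B$. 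Implicit differentiation of $P$, using $\delta(c_i) = 0$, expresses $\delta(a)$ as a rational function of $a, \bar b, \delta(\bar b), \bar c$, so $\delta(a) \in L$; iterating shows $L$ is $\delta$-stable. Since $L$ has transcendence degree at most $k + 1$ over $B\la\ra$, the sequence $a, \delta(a), \delta^2(a), \ldots$ lies in $L$ and is therefore eventually algebraically dependent over $B\la\ra$, witnessing that $a$ is $\delta$-algebraic over $B$.

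The inequality $\delta$-$\dim(X) \leq \ldim(X)$ now follows coordinatewise: for $a \in X$, pick an $\scl$-basis $a_{i_1}, \ldots, a_{i_k}$ of $a$ over $B$, so that each remaining coordinate lies in $\scl(B \cup \{a_{i_1}, \ldots, a_{i_k}\})$ and hence is $\delta$-algebraic over $B \cup \{a_{i_1}, \ldots, a_{i_k}\}$ by the lemma. So the $\delta$-transcendence degree of $a$ over $B$ is at most $k \leq \ldim(X)$; taking the supremum over $a \in X$ gives the inequality.

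For the reverse inequality, I would invoke the structure theorem from \cite{egh}: if $X$ is $\cal M$-definable with $\ldim(X) = k$, then in a sufficiently saturated elementary extension, $X$ contains (up to a subset of smaller large dimension) a definable piece whose projection onto $k$ coordinates is an $\cal R$-open set $U \subseteq \cal R^k$, and the remaining coordinates are given by a function $g \colon U \to \cal R^{n-k}$ with $g(u) \in \scl(B \cup \{u\})$ for every $u \in U$. Passing to a saturated CODF elementary extension $\cal K^* \succeq \cal K$, I would choose $u \in U$ whose coordinates are $\delta$-algebraically independent over $B$; such $u$ exist by the axioms of CODF, which produce $\delta$-generic points in every nonempty $\cal R$-open set. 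The key lemma then gives that $g(u)$ is $\delta$-algebraic over $B \cup \{u\}$, so $(u, g(u)) \in X$ has $\delta$-transcendence degree exactly $k$ over $B$, proving $\delta$-$\dim(X) \geq k$.

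The principal obstacle will be the compatibility between the two forms of saturation at play, namely the pair-theoretic saturation needed to access the open cell from the structure theorem and the differential saturation needed to realize a $\delta$-transcendental point inside that cell. This should reduce to noting that a sufficiently saturated CODF model admits a saturated pair-reduct $\la \cal R^*, C^*\ra$, but the bookkeeping requires care, particularly in verifying that the ``up to smaller large dimension'' error in the structure theorem does not absorb the points we care about.
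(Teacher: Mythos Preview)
Your first inequality, $\delta$-$\dim(X)\le\ldim(X)$, is correct and the pointwise lemma ``$a\in\scl(B)\Rightarrow a\in\cld(B)$'' via implicit differentiation is a clean way to obtain it. This direction is genuinely different from the paper's route: the paper never compares the two closure operators pointwise, but instead uses that both $\ldim$ and $\delta$-$\dim$ are \emph{dimension functions} in the sense of \cite{vdd-dim}, so by property~(7) of Fact~\ref{dimprop} it suffices to match them on definable subsets of $R$, where the question reduces to ``$\ldim(X)=0\iff\delta$-$\dim(X)=0$''. The paper then handles each implication with the small/large dichotomy (Fact~\ref{fact-small}) and the co-density of nonvanishing loci of differential polynomials (Lemma~\ref{0dC}). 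Your lemma buys a conceptually transparent inclusion of pregeometries; the paper's reduction buys that one never has to look inside $X$ at all.

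Your reverse inequality has a genuine gap: the structure theorem of \cite{egh} does \emph{not} produce an $\cal R$-open set $U\subseteq R^k$ together with a function $g$ whose graph sits inside $X$. What it produces is a $k$-\emph{cone}: an $\cal R$-definable image $h(\cal J)$ of a family $\cal J=\{J_g\}_{g\in S}$ of \emph{supercones} $J_g\subseteq R^k$ indexed by a small $S\subseteq C^m$. A supercone is not open---it is only co-small in an open cell, fibrewise. So the step ``choose $u\in U$ with coordinates $\delta$-algebraically independent over $B$, by the CODF axioms applied to the open set $U$'' does not go through as stated: you must instead argue that a $\delta$-generic point of the ambient open cell already lies in the supercone. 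That is true, but only because the small complement has $\delta$-dimension $0$---which is exactly your \emph{first} inequality applied to that complement. In other words, the second direction is not independent of the first in your scheme, and the ``up to smaller large dimension'' error you flag in the last paragraph is not cosmetic: handling it is the entire content of that direction. Once this is repaired (show $\delta$-$\dim$ of a supercone in $R^k$ equals $k$ by induction, using your first inequality on the small fibrewise complements), the argument works; but at that point the paper's reduction via dimension functions is both shorter and avoids invoking the full cone decomposition.
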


With a bit more work, we can characterize the notion of being internal to $C$ for sets definable in $\CK$.

\begin{cor}\label{cor} Let $\cal K=\la \cal R, \delta\ra$ be a closed ordered differential field and $C$ its field of constants. Let $X$ be a set definable in $\CK$. Then
$$\text{$X$ is internal to $C$} \,\,\Lrarr\,\,\,  \text{$\delta$-$\dim(X)=0$ and $X$ is definable in $\cal M=\la \cal R, C\ra$.}$$
\end{cor}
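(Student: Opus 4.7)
The plan is to prove the two implications using different tools: Theorem~\ref{main} is central for the direction $(\Leftarrow)$, while $(\Rightarrow)$ will require extra CODF-specific input beyond Theorem~\ref{main}.

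For $(\Leftarrow)$, I would start with $X$ definable in $\cal M$ with $\delta$-$\dim(X)=0$. Theorem~\ref{main} immediately gives $\ldim(X)=0$. I would then invoke the structural description of sets of vanishing large dimension in dense pairs from \cite{egh}: up to a finite partition, such an $X$ is the image of a definable subset of $C^m$ under an $\cal R$-definable map, for some $m$. Since $\cal R$-definable maps and definable subsets of $C^m$ are both $\CK$-definable, this directly exhibits $X$ as the image in $\CK$ of a definable surjection from a subset of $C^m$, i.e., internal to $C$.

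For $(\Rightarrow)$, I would start with $X$ internal to $C$ in $\CK$, witnessed by a $\CK$-definable surjection $f\colon Y \twoheadrightarrow X$ with $Y\sub C^m$. The $\delta$-dimension bound is immediate: $Y$ has $\delta$-dimension zero because $\delta\equiv 0$ on $C$, and $\delta$-dimension cannot grow under $\CK$-definable surjections. The substantive step is to prove that $X$ is definable in $\cal M$. Here my approach is to apply the $\delta$-cell decomposition of \cite{bmr} to the graph of $f$ and exploit the fact that in CODF the field of constants is pure and stably embedded: on the constant locus $Y$, every higher jet of the input vanishes, so each defining differential-polynomial condition for the graph collapses to an $\cal R$-polynomial condition, with parameters drawn from the jets of the original $\cal R$-parameters of $f$. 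Consequently $f|_Y$ is piecewise $\cal R$-definable, and $X=f(Y)$ is a finite union of $\cal R$-definable sets, hence definable in $\cal M$.

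The main obstacle is this descent argument in $(\Rightarrow)$: converting a priori $\CK$-definable data into $\cal R$-definable data on the constant locus. Theorem~\ref{main} alone cannot effect this, since it presupposes $\cal M$-definability, whereas here we must \emph{establish} $\cal M$-definability; the descent requires specific syntactic features of CODF (quantifier elimination together with $\delta$-cell decomposition and the purity of $C$) that do not appear in the statement of Theorem~\ref{main}.
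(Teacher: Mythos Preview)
Your $(\Leftarrow)$ direction is exactly the paper's: Theorem~\ref{main} gives $\ldim(X)=0$, and then Fact~\ref{fact-small} (the characterisation from \cite{egh}) yields internality to $C$.

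For $(\Rightarrow)$, your overall strategy is also the paper's---the $\delta$-dimension bound is immediate, and the substantive step is descent to $\cal M$-definability by exploiting that derivatives vanish on $C$---but the specific mechanism you propose has a gap. Decomposing the \emph{graph} of $f$ via $\delta$-cells (or via QE) produces conditions involving jets of both the input $x$ and the output $y=f(x)$. Restricting to $x\in Y\subseteq C^m$ kills the input jets, but nothing you have said kills the output jets $\delta y,\delta^2 y,\dots$, since $f(x)$ need not lie in $C$. So the assertion that ``each defining differential-polynomial condition for the graph collapses to an $\cal R$-polynomial condition'' is not justified as written; purity and stable embeddedness of $C$ concern definable subsets of powers of $C$, not functions from $C^m$ into $R$, and do not close this gap.

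The paper sidesteps the issue by working with the function directly rather than its graph. Lemma~\ref{definablefunctions} shows that any $\CK$-definable $f:R^n\to R$ satisfies $f(a)=F(a,\delta a,\dots,\delta^d a)$ for some $\cal R$-definable $F$; the proof uses the description of $\dcl^{\CK}(A)$ as the real closure of the differential field generated by $A$, together with compactness and quantifier elimination. The point is that this expresses the output purely in terms of \emph{input} jets, so output jets never enter. Restricting to $a\in C^n$ gives $f(a)=F(a,0,\dots,0)$, and hence $X=f(C^n)$ is visibly $\cal M$-definable and internal to $C$ in $\cal R$ (this is packaged as Lemma~\ref{Cinternal}). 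Replacing your graph decomposition by this function-level statement would repair your argument and make it coincide with the paper's.
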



In Section~\ref{example}, in contrast to the case of transseries \cite[Proposition 6.2]{adh}, we give several examples of sets definable in $\CK$ with $\delta$-dimension $0$ which are \emph{not} co-analyzable in $C$ (and hence also \emph{not} internal to $C$). Finally, in Section~\ref{dimdcf}, we prove the analogues of Theorem~\ref{main} in the realm of differentially closed fields of characteristic zero and in the realm of transseries.

\medskip


\noindent {\bf Acknowledgements.} We are grateful to Anand Pillay for several comments and suggestions that led to a significant improvement of a previous draft.

\section{Preliminaries}\label{preliminaries} Throughout we fix a real closed field $\cal R=\la R,<, +, \cdot\ra$ and a derivation $\delta$ on $R$. Also, for us, definability is always meant with parameters.

\subsection{Closed ordered differential fields}\label{codfs} The differential field $\cal K=\la \cal R, \delta\ra$ is a CODF if it is existentially closed among ordered differential fields. The class of CODFs is first-order axiomatisable in the language of ordered differential rings; for instance, Singer's axioms \cite{singer} state that $\CK\models$ CODF if and only if
\begin{enumerate}
\item [($\dagger$)] for differential polynomials $f,g_1,\dots,g_m\in K\{x\}$ with $ord(f)=n\geq ord(g_i)$, if there is $(a_0,a_1,\dots,a_n)\in R^{n+1}$ such that as algebraic polynomials
$$f(a_0,\dots,a_n)=0, \; \frac{\partial f}{\partial \delta^n x}(a_0,\dots,a_n)\neq 0, \; \text{ and } \; g_i(a_0,\dots,a_n)>0,$$
then there is $b\in K$ such that as differential polynomials
$$f(b)=0 \; \text{ and }\;  g(b)\neq 0.$$
\end{enumerate}

The theory CODF admits quantifier elimination \cite{singer} and elimination of imaginaries \cite{point} (in the language of ordered differential rings). Furthermore, by \cite[\S1.4.2]{brouette}, for any model $\CK=\la \cal R,\delta\ra$ and $A\sub R$ the definable and algebraic closure $\dcl^{\CK}(A)=\acl^{\CK}(A)$ equals the real closure of the differential field generated by $A$.



\medskip

\noindent \textbf{Notation.} Until Section \ref{dimdcf}, we fix a CODF $\CK=\la\cal R,\delta\ra$, with $C$ its field of contants, and $\cal M=\la \cal R, C\ra$. By \cite{bmr}, the reduct $\cal M$ is a dense pair of real closed fields.

\medskip

We now prove a preliminary result (Lemma \ref{Cinternal}) that will be used in \S\ref{mainproofs}.  Let $\cal N$ be any of $\CK, \CM, \CR$, and let $X$ be an arbitrary (not necessarily definable) subset of $R^m$. We say that $X$ is \emph{internal to $C$} in $\cal N$ if there is $n>0$ and a function $f:R^n \to R^m$ definable in $\cal N$ such that $X\sub f(C^n)$. In Lemma \ref{Cinternal}, we prove that for definable sets in $\CK$, internality to $C$ in $\cal N$ is invariant under varying $\cal N$ among $\CK$, $\CM$ and $\CR$. We will need  the following result on definable functions in a CODF\footnote{We thank Marcus Tressl for pointing out the argument in the proof.}.

\begin{lemma}\label{definablefunctions}
If $f:R^n\to R$ is a definable function in $\CK$, then there is $d\geq 0$ and a function $F:R^{n(d+1)}\to R$ definable in $\cal R$ such that
$$f(a)=F(a,\delta a,\dots,\delta^d a) \quad \textrm{ for all }a\in R^n.$$
\end{lemma}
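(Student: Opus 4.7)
The plan is to combine Singer's quantifier elimination for CODF with the explicit description of the definable closure $\dcl^{\CK}$ recalled in \S\ref{codfs}. The idea is that any $\CK$-definable scalar $y$ is forced to lie in the real closure of a finitely generated subfield of the differential field generated by its arguments, and a uniform ``jet depth'' $d$ can then be extracted from the formula defining $f$.

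First, I would apply Singer's quantifier elimination: the graph of $f$ is defined by a quantifier-free formula $\phi(x,y)$ in the language of ordered differential rings. Only finitely many derivatives of $x$ and of $y$ appear in $\phi$, so there exist $d_0, e \geq 0$ and an $\cal R$-definable (semialgebraic) formula $\Psi(u_0, \dots, u_{d_0}, v_0, \dots, v_e)$ such that
$$
\phi(x, y) \;\Lrarr\; \Psi\bigl(x, \delta x, \dots, \delta^{d_0} x,\, y, \delta y, \dots, \delta^{e} y\bigr).
$$

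Next, I would exploit that $f$ is a function, so $f(a) \in \dcl^{\CK}(a)$ for every $a \in R^n$. By the description in \S\ref{codfs},
$$
\dcl^{\CK}(a) \;=\; \text{real closure of } \Q\la a \ra \;=\; \bigcup_{N \geq 0} \dcl^{\cal R}(a, \delta a, \dots, \delta^{N} a),
$$
so $f(a) \in \dcl^{\cal R}(a, \delta a, \dots, \delta^{N(a)} a)$ for some $N(a) \geq 0$. A compactness argument, using that $\phi$ is a single fixed formula with derivative orders bounded by $d_0$ and $e$, should upgrade this to a uniform $d \geq d_0$ independent of $a$: namely, the tuple $(a, \delta a, \dots, \delta^{d} a)$ determines $f(a)$ among the finitely many realizations in $R$ of a suitable semialgebraic formula. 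Definable choice in the o-minimal structure $\cal R$ then extracts a semialgebraic function $F: R^{n(d+1)} \to R$ with $F(a, \delta a, \dots, \delta^{d} a) = f(a)$.

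The main obstacle will be making this uniformity step precise. A priori the integer $N(a)$ depends on $a$, and one must use the boundedness of orders in $\Psi$ together with the uniqueness of $f(a)$ as a solution of $\phi(a, y)$ to pin down a single $d$ that works for every $a$ and to ensure that the resulting assignment is genuinely $\cal R$-definable rather than merely set-theoretic. The trickiest point is ruling out that the semialgebraic projection of the set cut out by $\Psi$ to the $(u_0, \dots, u_d, v_0)$-coordinates has a fiber strictly larger than $\{f(a)\}$ over the jet of a generic $a$; if it does, one remedies this by enlarging $d$ and incorporating the additional constraints produced by applying $\delta$ to the atomic formulas appearing in $\phi$, relying on the fact that $f(a)$ is ultimately singled out by a semialgebraic condition on a sufficiently long jet.
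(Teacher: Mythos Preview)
Your overall strategy---quantifier elimination, the description of $\dcl^{\CK}$ as the real closure of the generated differential field, and compactness to secure a uniform jet depth---matches the paper's. Where you diverge is in how you close the argument, and that is where a gap remains.

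Compactness does give a uniform $d$ and finitely many $\cal R$-definable functions $F_1,\dots,F_r:R^{n(d+1)}\to R$ such that for every $a$ one has $f(a)\in\{F_1(u),\dots,F_r(u)\}$ with $u=(a,\delta a,\dots,\delta^d a)$. But invoking ``definable choice in $\cal R$'' at this point does not finish: an o-minimal Skolem function will select \emph{some} element of that finite set semialgebraically in $u$, not necessarily the value $f(a)$. Your proposed fix---differentiating the atomic subformulas of $\phi$ to generate further constraints on the jets---does not visibly terminate in a condition isolating $f(a)$ among the $F_i(u)$, and your closing appeal to ``the fact that $f(a)$ is ultimately singled out by a semialgebraic condition on a sufficiently long jet'' is precisely the statement that still needs proof.

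The paper closes the gap differently and more directly. The sets $X_i=\{a\in R^n:f(a)=F_i(a,\delta a,\dots,\delta^d a)\}$ are $\CK$-definable and partition $R^n$; a \emph{second} application of quantifier elimination (after possibly enlarging $d$) realises each $X_i$ as the preimage under the jet map of a semialgebraic $T_i\subseteq R^{n(d+1)}$. One then sets $F(b)=F_i(b)$ for $b\in T_i\setminus(T_1\cup\cdots\cup T_{i-1})$ and $F(b)=0$ otherwise, yielding the required $\cal R$-definable function without any analysis of the fibres of $\Psi$.
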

\begin{proof}
Since for any $A\sub R$ the definable closure $\dcl^{\CK}(A)$ equals the real closure of the differential field generated by $A$, it follows, by a standard compactness argument, that there is a partition $X_1,\dots, X_r$ of $R^n$ into sets definable in $\CK$ and, for some $d\geq 0$, there are functions $F_i:R^{n(d+1)}\to R$ definable in $\cal R$, for $i=1,\dots,r$, such that
$$f(a)=F_i(a,\delta a,\dots,\delta^d a) \quad \textrm{ for all }a\in X_i.$$
By quantifier elimination and after possibly increasing $d$, for each $i$, there is a set $T_i\sub M^{n(d+1)}$ definable in $\cal R$ such that
$$X_i=\{a\in R^n: (a,\delta a,\dots,\delta^d a)\in T_i\}.$$
Now define $F:R^{n(d+1)}\to R$ as
$$
F(b)=
\left\{
\begin{array}{cc}
F_1(b) & \text{ if } b\in T_1  \\
F_2(b) & \text{ if } b\in T_2\setminus T_1 \\
\vdots & \vdots \\
F_r(b) & \text{ if } b\in T_r\setminus (T_1\cup \cdots \cup T_{r-1}) \\
0   &    \text{ otherwise}
\end{array}
\right.
$$
This $F$ is the desired function.
\end{proof}

\begin{lemma}\label{Cinternal}
Assume $X\sub R^m$ is definable in $\CK$. If $X$ is internal to $C$ in $\CK$, then $X$ is definable in the pair $\cal M=\la \cal R, C\ra$ and internal to $C$ in $\cal R$.
\end{lemma}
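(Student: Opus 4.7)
The plan is to apply Lemma \ref{definablefunctions} twice. The first application converts the $\CK$-definable function $f$ witnessing internality into an $\CR$-definable replacement $G$ that witnesses the same internality, by substituting $0$ for every higher-order $\delta$-coordinate. The second application shows that the set $Y = G^{-1}(X)\cap C^n$ is $\CM$-definable, so that $X=G(Y)$ is itself $\CM$-definable.

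By hypothesis there is a $\CK$-definable $f:R^n\to R^m$ with $X\sub f(C^n)$. Applying Lemma \ref{definablefunctions} coordinatewise to $f$, one obtains $d\geq 0$ and an $\CR$-definable $F:R^{n(d+1)}\to R^m$ with $f(a)=F(a,\delta a,\dots,\delta^d a)$ for every $a\in R^n$. Define $G:R^n\to R^m$ by $G(a)=F(a,0,\dots,0)$; this is $\CR$-definable. Since $\delta^i c=0$ for $c\in C^n$ and $i\geq 1$, we have $f(c)=G(c)$ on $C^n$, and hence $X\sub f(C^n)=G(C^n)$. This establishes that $X$ is internal to $C$ in $\CR$, witnessed by $G$.

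For the $\CM$-definability of $X$, set $Y=G^{-1}(X)\cap C^n$, so that $X=G(Y)$; it suffices to show $Y$ is $\CM$-definable. The characteristic function $\chi:R^n\to R$ of $G^{-1}(X)$ is $\CK$-definable, since $X$ is $\CK$-definable and $G$ is $\CR$-definable. A second application of Lemma \ref{definablefunctions} produces $e\geq 0$ and an $\CR$-definable $\tilde H:R^{n(e+1)}\to R$ with $\chi(a)=\tilde H(a,\delta a,\dots,\delta^e a)$ for every $a\in R^n$. For $c\in C^n$ this specializes to $\chi(c)=\tilde H(c,0,\dots,0)$, whence
\[
Y=\{c\in C^n:\tilde H(c,0,\dots,0)=1\},
\]
which is $\CM$-definable as an $\CR$-definable condition on tuples restricted to $C^n$. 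Consequently $X=G(Y)$ is the image of an $\CM$-definable set under an $\CR$-definable function, hence $\CM$-definable. No serious obstacle is anticipated: the single conceptual point is that Lemma \ref{definablefunctions} trades a $\CK$-definable datum for an $\CR$-definable one evaluated on a differential jet, and at inputs drawn from $C$ the higher jets vanish, so the datum is readable inside the pair $\CM$.
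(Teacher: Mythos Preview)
Your proof is correct and follows essentially the same approach as the paper, namely applying Lemma~\ref{definablefunctions} and using that higher jets vanish on $C^n$. The paper avoids your second application of the lemma by first modifying $f$ (using the $\CK$-definability of $X$) so that $X=f(C^n)$ exactly, whence $X=\{F(c,0,\dots,0):c\in C^n\}$ is immediately seen to be $\CM$-definable and internal to $C$ in $\CR$.
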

\begin{proof}
Since $X$ is internal to $C$ in $\CK$, there is $n>0$ and a function $f:R^n\to R^m$ definable in $\CK$ such that $X\subseteq f(C^n)$. Since $X$ is definable in $\CK$, after possibly modifying $f$, we may assume $X=f(C^n)$. By Lemma~\ref{definablefunctions}, there is $d\geq 0$ and a function $F:R^{n(d+1)}\to R$ definable in $\cal R$ such that
$$f(a)=F(a,\delta a,\dots,\delta^d a) \quad \textrm{ for all }a\in R^n.$$
Hence, $f(a)=F(a,0,\dots,0)$ for all $a\in C^n$. This shows that $X$ is defined by
$$X=\{y\in R^m:\; \exists x\in C^n \textrm{ with } y=F(x,0,\dots,0) \}$$
Thus, $X$ is definable in $\cal M$ and internal to $C$ in $\cal R$.
\end{proof}
In view of Lemma \ref{Cinternal}, we say that a set definable in $\CK$ is \emph{$C$-internal} if it is so in any of $\CK, \CM, \CR$.


\smallskip

We now discuss the notion of differential dimension (or $\delta$-dimension). For $A\subseteq R$ we let $\cld(A)$ denote the set of elements of $R$ that are differentially algebraic over $A$; namely, $a\in \cld(A)$ if and only if $a$ is a solution of a nonzero differential polynomial in one variable with coefficients in the differential field generated by $A$. One can also work over a set of parameters $B\subseteq R$ and set $\cld_B(A)=\cld(A\cup B)$. This closure operator defines a pregeometry (localised at $B$) on $R$, called the $\cld_B$-pregeometry of $\CK$. Hence, we can induce a $\cld_B$-dimension on finite tuples; namely, given $a=(a_1,\dots,a_n)\in R^n$ we set $\cld_B$-$\dim(a)$ to be the maximal cardinality of a $\cld_B$-independent subtuple of $a$. That is, $\cld_B$-$\dim(a)=k$ if and only if, after possibly re-ordering the tuple, $a_i\notin \cld_B(a_1,\dots,a_{i-1})$ for $1\leq i\leq k$ and $a_j\in \cld_B(a_1,\dots,a_k)$ for $k<j\leq n$. It follows that $\cld_B$-$\dim(a)$ equals the differential transcendence degree over $B$ of the differential field generated by $a$.

\smallskip

For definable sets in $\CK$ we define $\delta$-dimension as follows.

\begin{definition}\label{defdeltadim}
Let $\CK^*=\la \cal R^*,\delta \ra$ be a $|R|^+$-saturated elementary extension of $\CK$. For any nonempty set $X$ definable in $\CK$, we set $X^*=X(R^*)$ (that is, the realisations in $\CK^*$ of any formula over $R$ defining $X$ in $\CK$). The \emph{$\delta$-dimension} is defined as
$$\delta\text{-}\dim(X)=\max_{a\in X^*}\; \cld_R\text{-}\dim(a),$$
where $\cld_R$-$\dim(a)$ is taken with respect to the $\cld_R$-pregeometry of $\CK^*$. We note that the definition is independent of the choice of the (suitably saturated) elementary extension $\CK^*$ of $\CK$. For matter of convenience one sets $\delta$-$\dim(\emptyset)=-\infty$.
\end{definition}

In \cite[Corollay 5.27]{bmr} it was shown that, for definable sets in $\CK$, this notion of $\delta$-dimension coincides with the one obtained from $\delta$-cell decomposition. Furthermore, in \S5.3 of the same paper, it was shown that $\delta$-dimension is a \emph{dimension function} in the sense of \cite{vdd-dim}. The following summarizes properties shared by all dimension functions.

\begin{fact}\label{dimprop} \cite[\S1]{vdd-dim} Let $X\subseteq R^n$ and $Y\subseteq R^m$ be definable in $\CK$.
\begin{enumerate}
\item If $X$ is a finite nonempty set, then $\delta$-$\dim(X)=0$.
\item $\delta$-$\dim(R^n)=n$.
\item If $n=m$, then $\delta$-$\dim(X\cup Y)=\max\{\delta$-$\dim(X), \; \delta$-$\dim(Y)\}$.
\item $\delta$-$\dim(X\times Y)=\delta$-$\dim(X)+\delta$-$\dim(Y)$.
\item if $\delta$-$\dim(X)=k$, then there is a coordinate projection $\pi$ such that $\delta$-$\dim\pi(X)=k$.
\item Let $f:R^n\to R^m$ be a function definable in $\CK$. Then $\delta$-$\dim f(X)\leq \delta$-$\dim(X)$. Furthermore, for $0\leq i\leq n$, if we let
$$X_i=\{r\in R^m: \delta\text{-}\dim f^{-1}(r)=i\},$$
then $X_i$ is definable in $\CK$ and
$$\delta\text{-}\dim X_i \;+\; i=\delta\text{-}\dim f^{-1}(X_i).$$
\item $\delta$-dim is completely determined by its effect on subsets of $R$ definable in $\CK$.
 \end{enumerate}
\end{fact}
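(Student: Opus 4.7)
The statement is attributed to \cite{vdd-dim} and collects properties common to every dimension function arising from a pregeometry on a first-order structure; combined with the fact, shown in \S5.3 of \cite{bmr}, that $\delta$-$\dim$ is such a dimension function, (1)--(7) follow. The plan of proof is therefore to verify that $\cld_R$ is a pregeometry on $R^*$ for which the induced dimension is definable in families, and then derive each clause.

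First I would check the pregeometry axioms for $\cld_R$. Reflexivity, monotonicity, and finite character are immediate from the definition via nonzero differential polynomials. Transitivity and the crucial exchange axiom follow from the identification, noted just before Definition \ref{defdeltadim}, that the $\cld_R$-dimension of a tuple equals the differential transcendence degree over $R$ of the differential field it generates; exchange for $\cld_R$ then reduces to the standard exchange property for differential transcendence. With this in hand, items (1), (2), (3), (5), (7) become essentially formal. For (1), every element of $R$ lies in $\cld_R(\emptyset)$, so a finite nonempty $X$ yields $\cld_R$-$\dim 0$. For (2), the $|R|^+$-saturation of $\CK^*$ together with the existence of arbitrarily long differentially transcendental sequences over $R$ produces an $n$-tuple in $(R^*)^n$ of $\cld_R$-dimension $n$. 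Item (3) follows from $(X\cup Y)^*=X^*\cup Y^*$, and (5) by reordering coordinates of a witness $a\in X^*$ so that its first $k$ entries form a $\cld_R$-basis and projecting onto them. For (7), the $\delta$-dimension of a definable set is expressed via $\cld_R$-dimensions of tuples, which are themselves controlled by the action of $\cld_R$ on singletons in $R$.

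The substantive clauses are (4) and (6). For (4), I would realise in a sufficiently saturated $\CK^*$ witnesses $a\in X^*$ and $b\in Y^*$ with $b$ generic in $Y^*$ over $R\cup\{a\}$; the pregeometry additivity formula
$$\cld_R\text{-}\dim(a,b)\;=\;\cld_R\text{-}\dim(a)\;+\;\cld_{R\cup\{a\}}\text{-}\dim(b)$$
then yields $\delta$-$\dim(X\times Y)=\delta$-$\dim(X)+\delta$-$\dim(Y)$. For (6), the inequality $\delta$-$\dim f(X)\leq\delta$-$\dim(X)$ is immediate since $f(a)\in\cld_R(a)$ whenever $f$ is $\CK$-definable. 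The harder point is that $X_i$ is itself definable in $\CK$ and satisfies $\delta$-$\dim(X_i)+i=\delta$-$\dim(f^{-1}(X_i))$: this is the definability of $\delta$-dimension in definable families. In the CODF setting I would invoke the $\delta$-cell decomposition of \cite[\S5]{bmr} to obtain that definability, after which the additivity clause follows once more from the pregeometry addition formula applied to the pair $(f(a),a)$ for $a$ generic in a fibre of maximal dimension.

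The main obstacle is exactly the definability-in-families statement inside (6): this is where abstract pregeometry bookkeeping is insufficient and one must appeal to the structural analysis (quantifier elimination together with $\delta$-cell decomposition) carried out in \cite{bmr}. Everything else in (1)--(7) reduces cleanly to the pregeometry axioms combined with saturation of $\CK^*$.
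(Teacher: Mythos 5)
Your overall strategy is sound but it is genuinely different from what the paper does. The paper offers no direct verification at all: it quotes \cite[\S 5.3]{bmr} for the statement that $\ddim$ is a \emph{dimension function} in the axiomatic sense of \cite{vdd-dim}, i.e.\ that it satisfies (1), (2), (3) and (6), and then invokes \cite[\S 1]{vdd-dim} for the purely formal derivation of (4), (5) and (7) from those four axioms (this is exactly the content of the Remark following Fact~\ref{dimprop}). Your plan instead re-derives most clauses from the $\cld_R$-pregeometry and saturation; that is a legitimate alternative, and for (1)--(5) your sketches are essentially correct. Two points deserve a word: for (2) one should note why the type of a differential transcendental over $R$ is consistent (finitely many inequations $p(x)\neq 0$ are simultaneously satisfiable because the complement of each solution set is dense), and for (4) the inequality $\geq$ silently uses that $\ddim(Y)$ is unchanged when the pregeometry is localised at the larger base $R\cup\{a\}$, which needs justification. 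You are also right that the real content of (6) is definability of dimension in families, for which one must appeal to the $\delta$-cell decomposition of \cite{bmr}.

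The one clause where your argument does not establish the statement is (7). You justify it by saying that $\ddim$ of a definable set is computed from $\cld_R$-dimensions of tuples, which are in turn controlled by the closure of singletons. That only says that $\ddim$ is determined by the pregeometry $\cld_R$; it is not the assertion of (7), which is that any dimension function satisfying (1), (2), (3), (6) is determined by its values on definable subsets of $R$. The correct proof is by induction on $n$: applying (6) to the coordinate projection $R^n\to R^{n-1}$, one partitions $X$ into the definable pieces $X_i$ over which the fibres (definable subsets of $R$) have dimension $i$, and recovers $\ddim(X)$ as $\max_i(\ddim(X_i)+i)$. This distinction matters for the paper: in the proof of Theorem~\ref{main}, clause (7) is used to compare $\ddim$ with $\ldim$, two dimension functions arising from \emph{different} pregeometries, and only the axiomatic version of (7) licenses the reduction to definable subsets of $R$. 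As written, your reading of (7) would not support that step.
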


\begin{remark}
We note that the definition of dimension function only requires properties (1), (2), (3), (6); and so properties (4), (5), (7) are consequences of these. This is all justified in \S1 of \cite{vdd-dim}. 
\end{remark}

\medskip

\subsection{Notions from dense pairs}\label{densepair}
In this subsection we recall the necessary material on large dimension from \cite{egh}. Recall that $\CK=\la \cal R,\delta\ra$ is a CODF, with $C$ its field of constants, and  $\cal M=\la \cal R,C\ra$. Let $\dcl$ denote definable closure in the o-minimal structure $\cal R$. For $A\subseteq R$, we set
$$\scl(A):=\dcl(A\cup C).$$
We can work over a parameter set $B\subseteq R$ and let
$$\scl_B(A):=\scl(A\cup B)=\dcl(A\cup B\cup C).$$
This closure operator defines a pregeometry (localised at $B$) on $R$, called the $\scl_B$-pregeometry of $\cal M$. In the usual manner, this induces a $\scl_B$-dimension on finite tuples  of $R$. For sets definable in $\cal M$ we can define a dimension, as follows:

\begin{definition}
Let $\cal M^*=\la \cal R^*,C^* \ra$ be a $|R|^+$-saturated elementary extension of $\cal M$. For any nonempty set $X$ definable in $\cal M$, we set $X^*=X(R^*)$ (i.e., the realisations in $\cal M^*$ of any formula over $R$ defining $X$ in $\cal M$). The \emph{large dimension}, denoted by $\ldim$, is defined as
$$\ldim(X)=\max_{a\in X^*}\; \scl_R\text{-}\dim(a),$$
where $\scl_R$-$\dim(a)$ is taken with respect to the $\scl_R$-pregeometry of $\cal M^*$. This is independent of the choice of the (suitably saturated) elementary extension $\cal M^*$ (see for instance \cite[\S2]{vdda}). Finally, one sets $\ldim(\emptyset)=-\infty$.
\end{definition}

As mentioned in the introduction, in \cite{egh} the large dimension was characterised topologically via a structure theorem
. Furthermore, in Lemma 6.11 of the same paper, it was shown that large dimension is a dimension function. Therefore large dimension satisfies properties (1)-(7) of Fact~\ref{dimprop} (stated there for $\delta$-$\dim$). This is one of the key ingredients in the proof of Theorem~\ref{main}. The other key ingredient is a characterisation, obtained in \cite{egh}, of definable sets in $\cal M$ of large dimension zero. This is given in terms of the following notion (essentially coming from \cite{dg}).

\begin{defn}\label{def-small}
Let $X\sub R^n$ be a set definable in $\cal M$. We call $X$ \emph{large} if there is $m\geq 0$ and a function $f:R^{nm}\to R$ definable in $\cal R$ such that $f(X^m)$ contains an open interval in $R$. We call $X$ \emph{small} if it is not large.
\end{defn}

\begin{fact}\label{fact-small}\cite[Corollary 3.11 and Lemma 6.11(3)]{egh}
Let $X$ be a set definable in $\cal M$. Then,
$$\ldim(X)=0\;  \iff \; X \text{ is small } \;  \iff \;  X \text{ is internal to C }$$
\end{fact}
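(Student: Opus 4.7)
The plan is to prove the cycle $X\text{ internal to }C \Rightarrow X\text{ small}\Rightarrow \ldim(X)=0 \Rightarrow X\text{ internal to }C$. The first implication is the most direct. Suppose $X\sub f(C^k)$ for some $f\colon R^k\to R^n$ definable in $\cal R$. Then for any $m$ and any $g\colon R^{nm}\to R$ definable in $\cal R$, the set $g(X^m)$ is contained in the image of $C^{km}$ under a single $\cal R$-definable function. A standard fact in dense pairs is that such images have empty interior in $R$: by o-minimal cell decomposition the composition reduces to a piecewise continuous map, and the image of a nowhere-dense set (which $C^{km}$ is, since $C$ is a proper dense subfield of $R$) under such a map remains nowhere dense. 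Hence $X$ is small.

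For the second implication, I argue by contrapositive. Assume $\ldim(X)\geq 1$, so in the saturated extension $\cal M^*$ there is $a\in X^*$ and an index $i$ with $a_i\notin \scl_R(a_1,\dots,a_{i-1})$. By the structure theorem for large dimension in \cite{egh}, after a suitable coordinate projection $\pi$ the set $\pi(X)$ contains a strong cell of positive large dimension; this in turn yields an $\cal R$-definable function $g$ and $m\geq 1$ such that $g(X^m)$ contains an open interval of $R$, witnessing that $X$ is large.

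For the third and key implication, suppose $\ldim(X)=0$. Then every $a\in X^*$ satisfies $\scl_R$-$\dim(a)=0$, i.e.\ $a\in \dcl(R\cup C^*)$, so there are an $\cal R$-definable function $F_a$ and a tuple $c_a\in (C^*)^{k_a}$ with $a=F_a(c_a)$. Using saturation of $\cal M^*$ together with compactness applied to the partial type ``$a\in X^*$'', this pointwise parameterization can be replaced by a \emph{finite} family $F_1,\dots,F_r$ of $\cal R$-definable functions with $X^*\sub \bigcup_i F_i((C^*)^{k_i})$. Elementarity transfers the inclusion back to $\cal M$, giving $X\sub \bigcup_i F_i(C^{k_i})$. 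Merging the $F_i$ into a single $\cal R$-definable function (by padding domains and switching on which component is used) then exhibits $X$ as internal to $C$.

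The main obstacle is this third implication. Converting the pointwise condition $\scl_R$-$\dim(a)=0$ at every $a\in X^*$ into a uniform parameterization of $X$ by $C$-tuples requires a careful compactness-plus-saturation step to extract a finite list of $\cal R$-definable parameterizations covering all of $X^*$; this is where the content of \cite[Corollary 3.11]{egh} really lies. Once that finite cover is in hand, the small $\Leftrightarrow$ internal direction reduces to the routine argument in the first paragraph, and the equivalence with $\ldim(X)=0$ follows by combining it with the structure theorem invoked in the second paragraph.
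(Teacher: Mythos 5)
The paper does not actually prove this statement---it is imported wholesale from \cite[Corollary 3.11 and Lemma 6.11(3)]{egh}---so your attempt can only be measured against the source. Your third implication ($\ldim(X)=0\Rightarrow X$ internal to $C$) is essentially right: the compactness-plus-saturation step converting the pointwise condition $a\in\dcl(R\cup C^*)$ into a finite cover $X^*\subseteq\bigcup_i F_i((C^*)^{k_i})$, the transfer back to $\cal M$ by elementarity, and the merging of the $F_i$ into one function are all sound and match the intended argument. The second implication is an acceptable sketch that defers to the structure theorem of \cite{egh}.

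The genuine gap is in your first implication (internal $\Rightarrow$ small). Two things go wrong. First, $C^{km}$ is \emph{not} nowhere dense: $C$ is dense in $R$ (that is the defining feature of a dense pair), so its closure is all of $R$; what is true is only that $C$ has empty interior. Second, and more seriously, the topological principle you invoke is false: continuous, even polynomial, images of nowhere dense sets (or of sets with empty interior) can contain intervals --- the sum map $(x,y)\mapsto x+y$ sends the square of the middle-thirds Cantor set onto $[0,2]$. The statement you actually need --- that no $\cal R$-definable function maps a power of $C$ onto a set with nonempty interior --- is precisely van den Dries's lemma that $A$ is small in a dense pair $\la \cal B,\cal A\ra$ \cite{vdd-dense}; its proof is model-theoretic, exploiting that $C$ is a \emph{proper elementary} substructure of $\cal R$ as a real closed field, and it fails for arbitrary proper dense subfields. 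Without this input your cycle of implications does not close. (Note that ``internal $\Rightarrow\ \ldim=0$'' is immediate, since the parametrization by $C^k$ transfers to $\cal M^*$ and places every $a\in X^*$ inside $\scl_R(\emptyset)$; but the same nontrivial smallness fact is then still needed to know that an interval has positive large dimension, so it cannot be avoided by rerouting the cycle.)
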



\section{Proof of Theorem \ref{main}}\label{mainproofs}

We prove Theorem~\ref{main} through a series of lemmas.

\begin{lemma}\label{ddimC} 
$\ddim \; C=0$.
\end{lemma}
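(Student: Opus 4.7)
The plan is essentially immediate from the definition of $\delta$-dimension. Let $\CK^* = \langle \cal R^*, \delta \rangle$ be a $|R|^+$-saturated elementary extension of $\CK$, and consider
$$C^* = C(R^*) = \{a \in R^* : \delta(a) = 0\},$$
the field of constants of $\CK^*$. By definition,
$$\ddim(C) = \max_{a \in C^*} \cld_R\text{-}\dim(a).$$
Since $C$ contains $0$, we have $\ddim(C) \geq 0$, so it suffices to show $\cld_R\text{-}\dim(a) = 0$ for every $a \in C^*$.

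For any $a \in C^*$, the defining condition $\delta(a) = 0$ exhibits $a$ as a zero of the differential polynomial $P(x) = \delta x$, which is a nonzero element of $R\{x\}$ (with integer coefficients, in fact). Hence $a$ is differentially algebraic over $R$, i.e., $a \in \cld(R) = \cld_R(\emptyset)$. Therefore $\cld_R\text{-}\dim(a) = 0$ for every $a \in C^*$, and taking the maximum yields $\ddim(C) = 0$.

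There is no real obstacle here; the statement is a one-line consequence of the fact that the formula defining $C$ is itself a nonzero differential equation.
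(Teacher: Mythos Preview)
Your proof is correct and follows exactly the same argument as the paper: pass to a saturated extension, note that every element of $C^*$ satisfies the nonzero differential polynomial $\delta x$, and conclude that each such element has $\cld_R$-dimension zero. The only difference is that you spell out a few more details (e.g., the nonemptiness giving $\ddim(C)\ge 0$), but the route is identical.
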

\begin{proof}
Let $\CK^*=\la \cal R^*,\delta\ra$ be an $|R|^+$-saturated elementary extension of $\CK=\la \cal R, \delta\ra$ and $C^*=C(R^*)$, see Definition~\ref{defdeltadim}. Any point $a$ in $C^*$ is a solution of the nonzero differential polynomial $\delta x$. Thus the differential transcendence degree of $a$ is zero. 
\end{proof}

\begin{lemma}\label{0d} Let $X\sub R^n$ be a set definable in $\cal M$. If $\ldim X=0$, then $\ddim X=0$.
\end{lemma}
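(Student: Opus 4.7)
The plan is to unfold the hypothesis $\ldim X=0$ into a concrete parametrization of $X$ by the constants, and then transfer the resulting bound to $\ddim$ using Lemma~\ref{ddimC} together with the dimension-function properties collected in Fact~\ref{dimprop}.

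First, I would apply Fact~\ref{fact-small} to conclude that $X$ is internal to $C$: there exist $k\geq 1$ and a function $f:R^k\to R^n$ definable in $\CM$ with $X\sub f(C^k)$. Since $\CM$ is a reduct of $\CK$, $f$ is also definable in $\CK$, so $\ddim f(C^k)$ makes sense. (If desired, Lemma~\ref{Cinternal} lets us even take $f$ definable in the pure real closed field $\CR$, but this refinement is not needed for the bound below.)

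Next I would chain three inequalities from Fact~\ref{dimprop}. Monotonicity of $\ddim$ (an immediate consequence of part (3)) gives $\ddim X\leq \ddim f(C^k)$; part (6) applied to the $\CK$-definable map $f$ gives $\ddim f(C^k)\leq \ddim C^k$; and part (4) combined with Lemma~\ref{ddimC} yields $\ddim C^k= k\cdot \ddim C=0$. Chaining these produces $\ddim X\leq 0$, and since $X$ is nonempty, $\ddim X=0$.

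There is no substantial obstacle: the lemma reduces to a clean concatenation of Fact~\ref{fact-small}, Lemma~\ref{ddimC}, and the standard dimension-function properties of $\ddim$. The conceptual content is simply that $C$ itself has $\delta$-dimension zero, so any $\CK$-definable image of $C^k$ inherits this bound, and large dimension zero places $X$ inside such an image. The only mild point to watch is that the parametrizing function produced by Fact~\ref{fact-small} must be seen as $\CK$-definable (which holds automatically because $\CM$ is a reduct of $\CK$) before invoking Fact~\ref{dimprop}(6) inside the CODF.
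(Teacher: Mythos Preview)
Your proof is correct and follows essentially the same route as the paper: invoke Fact~\ref{fact-small} to place $X$ inside $f(C^k)$ for some $\CM$-definable $f$, then chain monotonicity (Fact~\ref{dimprop}(3)), the image bound (Fact~\ref{dimprop}(6)), multiplicativity (Fact~\ref{dimprop}(4)), and Lemma~\ref{ddimC} to get $\ddim X\le \ddim f(C^k)\le \ddim C^k=0$. Your added remark that $f$ is automatically $\CK$-definable because $\CM$ is a reduct of $\CK$ is a nice point of care the paper leaves implicit.
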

\begin{proof}
Suppose $\ldim X=0$. By  Fact \ref{fact-small}, $X$ is internal to $C$; namely, there is a function $f:R^m\to R^n$ definable in $\cal M$, such that $X\subseteq f(C^m)$.  By Lemma \ref{ddimC} and (3), (4), (6) of Fact~\ref{dimprop},
$$\ddim X\leq  \ddim f(C^m)\le \ddim C^m =0.$$
\end{proof}


The following lemma is a useful fact on CODFs.

\begin{lemma}\label{0dC}
Let $p(x)$ be a nonzero differential polynomial over $R$. The solution set $\{c\in R: p(c)=0\}$ is co-dense in $R$ with respect to the order topology.
\end{lemma}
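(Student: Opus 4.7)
The plan is to show that for every nonempty open interval $(a,b) \subseteq R$ there is some $c \in (a,b)$ with $p(c) \neq 0$; this is precisely what co-density of the zero set of $p$ means. The tool will be Singer's axiom scheme $(\dagger)$ for CODFs, which, given a suitable algebraic witness, produces a differential solution of a chosen equation subject to prescribed strict inequalities. The trick is that although $(\dagger)$ is phrased as producing \emph{zeros} of the chosen differential polynomial $f$, we can use it to produce \emph{non-zeros} of $p$ by placing $p$ among the inequality constraints $g_i$ and letting $f$ be a harmless auxiliary equation.

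Concretely, set $n = \operatorname{ord}(p)$ and let $P \in R[y_0, \ldots, y_n]$ be the associated algebraic polynomial, so that $p(x) = P(x, \delta x, \ldots, \delta^n x)$ and $P \neq 0$. I will apply $(\dagger)$ with $f(x) = \delta^{n+1}x$ (of order $n+1$) and with inequality constraints $g_1(x) = x - a$, $g_2(x) = b - x$, $g_3(x) = \pm p(x)$, the sign to be chosen below. Since $f^* = y_{n+1}$ and $\partial f^*/\partial y_{n+1} = 1$, the non-degeneracy portion of the hypothesis of $(\dagger)$ is automatically satisfied at any candidate point of the form $(\alpha_0, \ldots, \alpha_n, 0) \in R^{n+2}$; and since $\operatorname{ord}(g_i) \leq n$, evaluation of each $g_i^*$ at such a point depends only on $(\alpha_0, \ldots, \alpha_n)$.

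The remaining task is to produce such a candidate with $\alpha_0 \in (a,b)$ and $P(\alpha_0, \ldots, \alpha_n)$ of definite strict sign. This reduces to a standard nonvanishing fact for nonzero polynomials over infinite fields: viewing $P$ as a polynomial in $y_1, \ldots, y_n$ with coefficients in $R[y_0]$, pick a nonzero coefficient $Q(y_0) \in R[y_0]$; choose $\alpha_0 \in (a,b)$ missing the finitely many roots of $Q$; and then pick $\alpha_1, \ldots, \alpha_n \in R$ so that $P(\alpha_0, \ldots, \alpha_n) \neq 0$. Orient $g_3$ so that this value is positive. Invoking $(\dagger)$ then produces $c \in R$ with $\delta^{n+1}c = 0$ (irrelevant), $c - a > 0$, $b - c > 0$, and $\pm p(c) > 0$; in particular $c \in (a,b)$ and $p(c) \neq 0$, as required.

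The only conceptual obstacle is the first sentence of the second paragraph: namely, noticing that $(\dagger)$ can be leveraged to avoid zeros of a given differential polynomial $p$ by demoting $p$ to an inequality constraint and padding with the trivial auxiliary equation $\delta^{n+1}x = 0$, whose algebraic shape $f^* = y_{n+1}$ makes the separant condition $\partial f^*/\partial y_{n+1} \neq 0$ hold trivially. Once this re-framing is in place, the rest is routine polynomial algebra.
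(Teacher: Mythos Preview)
Your proof is correct and follows essentially the same route as the paper: both apply Singer's axiom $(\dagger)$ with the auxiliary equation $f(x)=\delta^{n+1}x$ (the paper writes $\delta^m x$ with $m>\operatorname{ord}(p)$) and place $p$ together with $x-a$ and $b-x$ among the inequality constraints $g_i$. You are in fact a bit more careful than the paper in explicitly handling the sign of $p$ via $\pm p$ and in checking the separant condition, but the underlying idea is identical.
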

\begin{proof}
This fact follows from the axioms of CODFs stated in ($\dagger$) in \S\ref{codfs}, and  appears to be folklore. We provide the details. Let $(a,b)$ be a nonempty open interval, we prove that there is $c\in (a,b)$ such that $p(c)\neq 0$. Let $f(x)=\delta^m x$ with $m>\textrm{ord}(p(x))$, $g_0(x)=p(x)$, $g_1(x)=x-a$ and $g_2(x)=b-x$ .
One can find $\bar c=(c_0,\dots,c_m)\in R^m$ such that, as algebraic polynomials, $f(\bar c)=0$, $g_0(\bar c)\neq 0$, $g_1(\bar c)>0$ and $g_2(\bar c)>0$. By the axioms ($\dagger$), we can find $c\in R$ such that, as differential polynomials, $g_0(c)\neq 0$, $g_1(c)>0$ and $g_2(c)>0$. This yields $p(c)\neq 0$ and $c\in (a,b)$, as desired.
\end{proof}

\begin{lemma}\label{ld}
 Let $X\sub R^n$ be a set definable in $\cal M$. If $\ddim X=0$, then $\ldim X=0$.
\end{lemma}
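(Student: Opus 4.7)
The plan is to prove the contrapositive: assuming $\ldim X \geq 1$, I will derive $\ddim X \geq 1$. Since $X$ is definable in $\cal M$ with $\ldim X > 0$, Fact \ref{fact-small} yields that $X$ is large, so there exist $m \geq 1$ and an $\cal R$-definable function $f : R^{nm} \to R$ such that $f(X^m)$ contains an open interval $(a,b) \subseteq R$. The proof will then sandwich $\ddim f(X^m)$ between two bounds: parts (4) and (6) of Fact \ref{dimprop} give $\ddim f(X^m) \leq \ddim X^m = m \cdot \ddim X$, while monotonicity of $\ddim$ (a direct consequence of Fact \ref{dimprop}(3), applied to subsets of $R$) gives $\ddim f(X^m) \geq \ddim(a,b)$. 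So the whole argument reduces to showing $\ddim(a,b) \geq 1$: this forces $m \cdot \ddim X \geq 1$, and hence $\ddim X \geq 1$, since $\ddim$ is integer-valued and $m \geq 1$.

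To establish $\ddim(a,b) \geq 1$, the plan is to pass to the $|R|^+$-saturated elementary extension $\CK^* = \la \cal R^*, \delta \ra$ from Definition \ref{defdeltadim} and realize the partial type over $R$
$$\Sigma(x) \;=\; \{a < x < b\} \,\cup\, \{p(x) \neq 0 : p \in R\{x\} \setminus \{0\}\}.$$
A finite fragment of $\Sigma$ asks for some $c \in (a,b)$ avoiding the zero sets of finitely many nonzero differential polynomials $p_1, \ldots, p_k$ over $R$; since the product $p_1 \cdots p_k$ is itself a nonzero differential polynomial, Lemma \ref{0dC} produces such a $c$ in $R$. Hence $\Sigma$ is finitely satisfiable, and saturation provides a realization $c^* \in (a,b)^*$. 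By construction $c^*$ annihilates no nonzero element of $R\{x\}$, so it is differentially transcendental over $R$, giving $\cld_R$-$\dim(c^*) = 1$, and therefore $\ddim(a,b) \geq 1$ by Definition \ref{defdeltadim}.

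The main obstacle is exactly this last step --- exhibiting a differentially transcendental element of $(a,b)^*$ over $R$ --- and this is precisely what Lemma \ref{0dC} (the CODF-specific input) combined with saturation delivers. Everything else is bookkeeping with the general dimension-function properties of $\ddim$ from Fact \ref{dimprop} and the large/small characterization from Fact \ref{fact-small}.
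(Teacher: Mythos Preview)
Your proof is correct and follows essentially the same route as the paper's: both argue by contraposition, invoke Fact~\ref{fact-small} to obtain an interval inside $f(X^m)$, bound $\ddim f(X^m)$ via Fact~\ref{dimprop}(4),(6), and close with a compactness argument built on Lemma~\ref{0dC}. The only cosmetic difference is that the paper runs compactness in the other direction (from $\ddim f(X^m)=0$ it extracts a single nonzero differential polynomial vanishing on $f(X^m)$, contradicting Lemma~\ref{0dC}), whereas you use it to show $\ddim(a,b)\geq 1$ directly; these are two phrasings of the same step.
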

\begin{proof}
Suppose towards a contradiction that $\ldim X>0$. By Fact~\ref{fact-small}, $X$ is large, and hence, by definition,
there is a function $f:R^{nm}\to R$ definable in $\cal R$ such that $f(X^m)$ contains an open interval in $R$. On the other hand, by (4) and (6) of Fact~\ref{dimprop}, we have
$$\ddim f(X^m)\le \ddim X^m=0.$$
By compactness, there must exist a nonzero differential polynomial $p(x)$ over $R$ that vanishes in all of $f(X^m)$. As the latter contains an interval, this contradicts Lemma \ref{0dC}.
\end{proof}

We can now prove Theorem~\ref{main} and Corollary~\ref{cor}.

\begin{proof}[Proof of Theorem \ref{main}]
By property (7) in Fact~\ref{dimprop} of dimension functions, $\ddim$ and $\ldim$ will coincide in all sets definable in $\cal M$ as long as they coincide on definable subsets of $R$. By (2) in Fact~\ref{dimprop}, $\ddim(R)=1=\ldim(R)$, and so any nonempty definable subset of $R$ has $\ddim$ and $\ldim$ either 0 or 1. Hence, it suffices to show that for definable $X\subseteq R$ we have
$$\ddim(X)=0 \; \iff \; \ldim(X)=0.$$
But this follows by putting together Lemmas \ref{0d} and \ref{ld}.
\end{proof}

\begin{proof}[Proof of Corollary \ref{cor}]
Left-to-right is by Lemma \ref{Cinternal}. For right-to-left, by Theorem \ref{main}, $\ldim(X)=0$, and, by Fact \ref{fact-small}, $X$ is internal to $C$.
\end{proof}

\section{Examples of zero dimensional not co-analyzable sets}\label{example}

In this section we exhibit examples of sets definable in $\CK$ with $\delta$-dimension $0$ which are not co-analyzable in $C$ (and hence also not internal to $C$). We recall that in the case of transseries no such example exists \cite[Proposition 6.2]{adh}. Our examples come from classical constructions of strongly minimal sets in DCF$_0$ that are orthogonal to the constants (over suitable parameter sets). We carry on our terminology from previous sections. Furthermore, for convenience, here we work in a universal (sufficiently saturated) model $\CK=\la\cal R,\delta\ra$ of CODF with constants $C$.

\smallskip


We begin by recalling the definition of co-analyzability in $C$ for subsets of $R^n$ (see \cite{udi} for further details). A set $X\sub R^n$ definable in $\CK$ is said to be \emph{co-analyzable in $C$ in $0$-steps} if $X$ is finite, and \emph{in $(r+1)$-steps} if there is a set $Y\subseteq C\times R^n$ definable in $\CK$ such that
\begin{enumerate}
\item the canonical projection $\pi:C\times R^n\to R^n$ maps $Y$ onto $X$, and
\item for each $c\in C$ the fibre $Y_c=\{a\in R^n: (c,a)\in Y\}$ is co-analyzable in $C$ in $r$-steps.
\end{enumerate}
\emph{Co-analyzable} means  co-analyzable in $r$-steps for some $r\geq 0$.

\bigskip



The following proposition is a type of ``orthogonality transfer principle" from DCF$_0$ to CODF. Let $F=R(i)$ and $\cal U=\la F, \delta\ra$; namely the underlying set of $\cal U$ is simply the algebraic closure of $R$ and the derivation $\delta$ is the unique extension from $R$ to $F$. From \cite{singer2}, we know $\cal U$ is a DCF$_0$. We recall that a strongly minimal set $Z$ defined in $\cal U$ is said to be orthogonal to the constants if for any parameter set $A\subset F$ over which $Z$ is defined and generic point $a\in Z$ over $A$ we have that, for any tuple $c$ of constants from $\cal U$,
$$\acl^{\cal U}(A,a) \cap \acl^{\cal U}(A,c)=\acl^{\cal U}(A).$$
Here we also recall that, for any subset $B\subset F$, the algebraic closure $\acl^{\cal U}(B)$ is just the field-theoretic algebraic closure of the differential field generated by $B$ in $F$.

\begin{proposition}\label{ZcoanC}
Let $Z\subseteq F^n$ be a set definable in $\cal U$ over parameters from $R$ that is strongly minimal. Let $X=Z(R)$ (the points of $X$ whose entries are in $R$). If $Z$ is orthogonal to the constants in $\cal U$ and $X$ is infinite, then $X$ is not co-analizable in $C$.
\end{proposition}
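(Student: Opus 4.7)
The plan is to argue by contradiction: assume $X$ is co-analyzable in $C$ in $r$-steps for some $r \geq 0$ and derive a violation of the orthogonality of $Z$ to the constants in $\cal U$.

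I would first establish, by induction on $r$, the following key lemma: \emph{if $W \subseteq R^n$ is definable in $\CK$ over a parameter set $B \subseteq R$ and co-analyzable in $C$ in $r$-steps, then for every $b \in W$ there is a tuple of constants $c$ from $C$ with $b \in \acl^{\cal U}(B, c)$}. The base $r = 0$ is immediate: a finite $B$-definable set sits inside $\dcl^{\CK}(B) \subseteq \acl^{\cal U}(B)$, so take $c$ empty. For the inductive step, let $Y \subseteq C \times R^n$ be a witness of $(r+1)$-step co-analyzability. Any $b \in W$ lies in some fiber $Y_{c_0}$, which is definable in $\CK$ over $B \cup \{c_0\}$ and co-analyzable in $r$-steps; the inductive hypothesis then yields a tuple $c_1$ from $C$ with $b \in \acl^{\cal U}(B, c_0, c_1)$, and $(c_0, c_1)$ witnesses the claim.

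For the main argument, let $A \subseteq R$ be a parameter set over which both $Z$ and any witness for $X$'s co-analyzability are defined, and pass to an $|A|^{+}$-saturated elementary extension $\CK^* \succ \CK$ in CODF. By Singer's characterization of CODF together with quantifier elimination for $\mathrm{DCF}_0$, the expansion $\cal U^* := \langle R^*(i), \delta\rangle$ is a model of $\mathrm{DCF}_0$ and an elementary extension of $\cal U$. Since $X$ is infinite, $X^* = Z(R^*)$ has size at least $|A|^{+}$, while $\acl^{\cal U^*}(A) \cap Z^*$ has cardinality at most $|A| + \aleph_0$; we can therefore choose $a \in X^*$ with $a \notin \acl^{\cal U^*}(A)$, and by strong minimality such $a$ is generic in $Z^*$ over $A$ in $\cal U^*$. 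On the other hand, $X^*$ is still co-analyzable in the constants of $\CK^*$ in $r$-steps, so the key lemma applied in $\CK^*$ produces a tuple of constants $c$ with $a \in \acl^{\cal U^*}(A, c)$. Orthogonality of $Z$ to the constants in $\cal U$ transfers to $\cal U^*$ by elementarity and forces $a \in \acl^{\cal U^*}(A, a) \cap \acl^{\cal U^*}(A, c) = \acl^{\cal U^*}(A)$, the desired contradiction.

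The principal subtlety I anticipate is coordinating the three structures $\CK \subseteq \cal U$ together with their extensions, particularly confirming cleanly that $\cal U \preceq \cal U^*$ and that co-analyzability is preserved under elementary extensions of $\CK$; both facts are standard but deserve explicit justification before invoking the orthogonality hypothesis.
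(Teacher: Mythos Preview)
Your approach is essentially the same as the paper's: both arguments boil down to showing that co-analyzability forces every point of $X$ into $\acl^{\cal U}(A,c)$ for some tuple of constants $c$, and then choosing a generic point of $Z$ over $A$ to contradict orthogonality. The paper organizes the induction slightly differently (it reduces $(r+1)$-step co-analyzability to $r$-step by passing to an infinite fibre $Y_c\subseteq X$, rather than isolating your key lemma), and since Section~4 already fixes $\CK$ to be a sufficiently saturated universal model, your passage to $\CK^*$ and the attendant worry about $\cal U\preceq\cal U^*$ are unnecessary---you can run the whole argument inside $\CK$ and $\cal U$ directly.
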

\begin{proof}
We first show that $X$ is not 1-step co-analysable in $C$. Towards a contradiction, suppose it is,
via a definable in $\CK$ subset $Y\subseteq C\times R^n$. Let $L$ be a small (with respect to saturation) differential subfield of $R$ over which all of $Z$, $X$, and $Y$ are defined. As $X$ is infinite, let $a\in X$ be such that $a \notin \acl^{\CK}(L)$. Recall that $\acl^{\CK}(L)$ is just the (relative) field-theoretic algebraic
closure of $L$ in $R$. Let $c \in C$ be such that $(c, a) \in Y$. In particular, $a \in \acl^{\CK}(L,c)$. As $c$ is a constant, this means that $a$ is in the field-theoretic algebraic closure of $L(c)$ in $R$ and not
in the field-theoretic algebraic closure of $L$ in $R$. This is of course also true after replacing $R$ for
$F$. As $a$ is a generic point of $Z$ over $L$ (in the sense of $\cal U$ by strong minimality of $Z$), this contradicts orthogonality of $Z$ to the constants (in $\cal U$). Thus, $X$ is not 1-step co-analyzable.

We now prove $X$ is not co-analyzable in $r+1$-steps. If it were we would have a definable in $\CK$ subset $Y\subseteq C\times R^n$ such that the projection $\pi:C\times R^n\to R^n$ maps $Y$ onto $X$ and all fibres $Y_c$ are $r$-step co-analyzable in $C$. Now take any $c\in C$ such that the fibre $Y_c$ is infinite. As $Y_c$ is contained in $X$, we can apply induction on $r$ and the same argument as above to contradict orthogonality of $Z$ to the constants.
\end{proof}

\begin{remark}
We note that the proof of Proposition~\ref{ZcoanC} relies essentially only on the fact that for $A\sub R$ the algebraic closure $\acl^\CK(A)$ equals the real closure in $R$ of the differential field generated by $A$.
\end{remark}

Proposition \ref{ZcoanC} yields plenty of examples of sets definable in $\CK$ of $\ddim$ zero and not co-analyzable in $C$. For instance, those of the form $X=Z(R)$ where $Z$ is the Manin kernel of a simple abelian variety defined over $R$ which do not descend to the constants in  $\cal U$. For more explicit examples, one can use the so-called Rosenlicht extensions \cite{rosenlicht}, as follows.

\begin{corollary}\label{forexa}
Let $f(x)$ be either $x^3-x^2$ or $\frac{x}{x-1}$. Then, the subset $X\sub R$ defined by $\delta(x)=f(x)$ is not co-analyzable in $C$.
\end{corollary}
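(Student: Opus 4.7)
The plan is to reduce to Proposition~\ref{ZcoanC}. Let $Z\subseteq F$ be the set defined by $\delta(x)=f(x)$ in the DCF$_0$ structure $\cal U=\langle F,\delta\rangle$; by definition $X=Z(R)$, so it suffices to verify the three hypotheses of Proposition~\ref{ZcoanC}: that $Z$ is strongly minimal in $\cal U$, that $Z$ is orthogonal to the constants in $\cal U$, and that $X$ is infinite.

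For the first two hypotheses I would simply appeal to the classical Rosenlicht examples \cite{rosenlicht}: both $\delta x=x^{3}-x^{2}$ and $\delta x=x/(x-1)$ are order-one autonomous equations over $\Q$ whose solution set in $\cal U$ is a standard instance of a strongly minimal set orthogonal to the field of constants (absence of a rational first integral is what drives both properties in this order-one setting). Since $Z$ is already defined over $\Q\subseteq R$, no extra parameters are needed.

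For infiniteness of $X$, I would argue directly from the CODF axioms $(\dagger)$ recalled in \S\ref{codfs}. Clearing denominators, the equation $\delta x=f(x)$ can be written as $\phi(x)=0$ where $\phi$ is an order-one differential polynomial over $\Q$, which as an algebraic polynomial in $(x_{0},x_{1})$ has $\partial\phi/\partial x_{1}$ a nonzero polynomial in $x_{0}$ (identically $1$ in the polynomial case, and $x_{0}-1$ in the rational case). Given any nonempty open interval $(a,b)\subseteq R$ that avoids the finite set where $\partial\phi/\partial x_{1}$ vanishes, pick any $c_{0}\in(a,b)$ and set $c_{1}:=f(c_{0})$; together with the order-zero positivity conditions $g_{1}(x)=x-a$ and $g_{2}(x)=b-x$, the tuple $(c_{0},c_{1})$ fulfils the algebraic hypothesis of $(\dagger)$. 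Thus $(\dagger)$ yields a differential solution $c\in(a,b)$ of $\delta x=f(x)$, so $X$ meets every such interval and is in particular infinite. Applying Proposition~\ref{ZcoanC} then gives that $X$ is not co-analyzable in $C$.

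The main potential obstacle, for a self-contained treatment, would be the first step: a direct verification of strong minimality together with orthogonality to the constants for these two specific Rosenlicht equations in $\cal U$. I would rely on the cited literature for this, rather than reprove it here.
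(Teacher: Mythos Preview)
Your argument is correct and follows the same overall route as the paper: reduce to Proposition~\ref{ZcoanC} and invoke Rosenlicht for orthogonality to the constants in $\cal U$. The only real difference is in how infiniteness of $X=Z(R)$ is obtained. The paper argues purely algebraically---for any differential field $L$ one can extend the derivation to $L(x)$ by sending $x\mapsto f(x)$---and then uses saturation of the ambient universal CODF to realise infinitely many solutions in $R$. Your argument instead applies the CODF axioms $(\dagger)$ directly to produce a solution in every sufficiently small interval, which is a bit more hands-on but has the mild advantage of not appealing to saturation and of showing $X$ is in fact dense in $R$. Either way the input to Proposition~\ref{ZcoanC} is the same, and the strong-minimality check you flag is indeed routine (an order-one equation in DCF$_0$ has only finite or cofinite definable subsets), which is presumably why the paper leaves it implicit.
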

\begin{proof}
The set $X$ is infinite; indeed, for any differential field $L$, we can define a derivation on $L(x)$ that maps $x\mapsto f(x)$. Furthermore, by a classical result of Rosenlicht (see \cite[Proposition 2]{rosenlicht}), the subset of $F$ defined by $\delta(x)=f(x)$ is orthogonal to the constants in $\cal U$. The result now follows from Proposition~\ref{ZcoanC}.
\end{proof}

We conclude this section with an application on the existence of a proper CODF extension with the same field of constants. We are not aware of any such example in the literature (and no such examples exist in the case of transseries).

  \begin{corollary}
  There is a proper extension $R\preccurlyeq S$ of CODFs with the same constants.
  \end{corollary}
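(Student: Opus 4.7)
My plan is to combine the Rosenlicht construction underlying Corollary~\ref{forexa} with an Omitting Types argument. Let $t$ be transcendental over $R$, and extend the derivation $\delta$ to $R(t)$ by $\delta t = t^3 - t^2$, choosing an ordering on $R(t)$ extending the one on $R$. By \cite[Proposition~2]{rosenlicht} (as invoked in the proof of Corollary~\ref{forexa}), the constant field of $R(t)$ is $C$. Passing to the real closure $F := R(t)^{\mathrm{rc}}$ as an ordered differential field does not add constants, since $C$---being the constant field of a CODF---is itself real closed.

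It then remains to embed $F$ in a CODF $S$ with $C(S) = C$; by model completeness of CODF this automatically gives $R \preccurlyeq S$, while $t \in F \subseteq S \setminus R$ ensures that the extension is proper. For the embedding I would apply the Omitting Types Theorem to the theory $T := \mathrm{CODF} \cup \mathrm{Diag}(F)$ and the partial type
\[
q(x) := \{\delta x = 0\} \cup \{x \neq c : c \in C\},
\]
whose omission in a model of $T$ is exactly the condition $C(S) = C$.

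The crux---and main obstacle---is to show that $q$ is non-isolated in $T$. Suppose, for contradiction, some $\phi(x)$ over $F$ isolates $q$. Then $\phi(x) \wedge \delta x = 0$ is consistent and defines a subset of the constants in any CODF extension $\la\cal R^{*},\delta\ra$ of $F$ that avoids $C$ entirely. Viewing this subset inside the dense pair reduct $\la\cal R^{*}, C^{*}\ra$, van den Dries' theorem on dense pairs \cite{vdd-dense} shows that it is already definable in the real closed field $C^{*}$; together with the elementarity $C \preccurlyeq C^{*}$ (which follows from $R \preccurlyeq \cal R^{*}$), this forces the set to have a realization in $C$, contradicting $\phi \vdash x \neq c$ for each $c \in C$. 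Once non-isolation is established, OTT delivers the desired CODF extension of $R$ with the same constants. The bookkeeping of parameters (which may lie in $F \setminus R$ and must ultimately be reduced to parameters in $C$) is where the argument is most delicate.
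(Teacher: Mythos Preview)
Your construction of $F = R(t)^{\mathrm{rc}}$ with $C_F = C$ via Rosenlicht is fine, but the Omitting Types step has genuine gaps. First, OTT needs a countable language, hence a countable $F$; since the corollary is existential in $R$ this is repairable by starting from a countable CODF, but you do not say so. The real problem is non-isolation. As written it can simply fail: if you place $t$ above $R$ in the ordering of $R(t)$, then $\phi(x) := (x > t) \wedge (\delta x = 0)$ is consistent with $T$ (constants are dense in any CODF extension) yet implies $x \neq c$ for every $c \in C \subseteq R$, so $\phi$ already isolates $q$. Even if you choose a finite cut for $t$, your dense-pairs reduction only shows that the subset of $C^*$ cut out by $\phi \wedge \delta x = 0$ is definable in the real closed field $C^*$ \emph{with parameters from $C^*$}; since those parameters need not lie in $C$, the elementarity $C \preccurlyeq C^*$ does not force a realization in $C$. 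You flag this as ``most delicate'' but do not close it, and I do not see a direct fix.

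The paper sidesteps all of this. Having produced in Corollary~\ref{forexa} a set $X$ definable in $\CK$ that is not co-analyzable in $C$, it simply invokes \cite[Proposition~6.1(iv)]{adh} (a form of the Herwig--Hrushovski--Macpherson criterion \cite{udi}): a definable set is co-analyzable in $C$ exactly when $X(M) = X(N)$ for every elementary pair $M \preccurlyeq N$ with the same constants. The contrapositive immediately yields a proper $R \preccurlyeq S$ with $C(S) = C$. Your OTT argument is, in effect, an attempt to reprove the hard direction of this equivalence by hand for one particular $X$; the black-box route is both shorter and avoids precisely the parameter bookkeeping you could not resolve.
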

\begin{proof}
This follows from the existence of a non-co-analyzable in $C$ definable set (given by Corollary \ref{forexa}, for instance) and \cite[Proposition 6.1(iv)]{adh}.
\end{proof}


\section{Analogues of Theorem~\ref{main} in DCF$_0$ and transseries}\label{dimdcf}

In this section we point out the necessary adaptations of the arguments in \S\ref{mainproofs} that are needed to prove the analogues of Theorem~\ref{main} in the case of DCF$_0$ and transseries.

\medskip

\subsection{The case of DCF$_0$.}\label{dcf}

To state the analogue of Theorem~\ref{main} in differentially closed fields of characteristic zero, we recall some properties of pairs of ACFs, for which we use \cite{vdda}. We fix an algebraically closed field $\cal F=\la F, +, \cdot\ra$ and a derivation $\delta$ on $F$ such that $\CK=\la \cal F,\delta\ra$ is a model of DCF$_0$. We  let $C$ be the field of constants of $\CK$. The structure $\cal M=\la\cal F,C\ra$ is an elementary pair of algebraically closed fields (of characteristic zero).

As in \S\ref{densepair}, the algebraic closure operator, $\acl$, of the strongly minimal structure $\cal F$ induces a pregeometry on $\cal M$; namely, for $B\subseteq F$ one sets
$$\cl_B(A)=\acl(A\cup B\cup C)$$
and this defines a pregeometry (localised at $B$) on $F$, called the $\cl_B$-pregeometry of $\cal M$. This induces a $\cl_B$-dimension on finite tuples of $F$, and for a nonempty set $X$ definable in $\cal M$, we define:
$$\dim_2(X)=\max_{a\in X^*}\; \; \cl_F\text{-}\dim (a)$$
where $X^*=X(F^*)$ and $\cal M^*=\la \cal F^*, C^*\ra$ is a $|F|^+$-saturated elementary extension of $\cal M$. One sets $\dim_2(\emptyset)=-\infty$.

In \cite{vdda} the dimension $\dim_2$ was studied (in the general context of pairs of algebraically closed fields). In the same paper they showed that this is a dimension function, and hence satisfies properties of (1)-(7) of Fact~\ref{dimprop}. On the other hand, the differential dimension, $\ddim$, can be defined in the same way as in Definition~\ref{defdeltadim} for definable sets in $\CK$. Furthermore, $\delta$-dim in $\CK$ is also a dimension function (as pointed out in \cite{Asch}).

The analogue of Theorem~\ref{main} in the DCF$_0$ setting is

\begin{theorem}\label{dcfcase}
Let $\CK=\la \cal F,\delta \ra$ be a differentially closed field of characteristic zero and $C$ its field of constants. Let $X$ be a set definable in the pair $\cal M=\la \cal F, C\ra$. Then
$$\dim_2(X)=\ddim(X).$$
\end{theorem}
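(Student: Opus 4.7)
The plan is to replicate the proof of Theorem \ref{main} in the algebraically closed setting, swapping the dense-pair ingredients for their pair-of-ACF analogues. Both $\ddim$ and $\dim_2$ are dimension functions, and they already agree on $F$ itself (both equal $1$ by property (2) of Fact \ref{dimprop}). By property (7) of the same fact it therefore suffices to establish $\dim_2(X) = 0 \iff \ddim(X) = 0$ for every $\cal M$-definable $X \subseteq F$.

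First I would record the trivial analogue of Lemma \ref{ddimC}: $\ddim(C) = 0$, since any $c \in C$ is annihilated by the nonzero differential polynomial $\delta x$. The main input from the pair side is the analogue of Fact \ref{fact-small} for pairs of algebraically closed fields, available from \cite{vdda}: an $\cal M$-definable set $X \subseteq F^n$ has $\dim_2(X) = 0$ if and only if $X$ is \emph{small}, meaning that no $\cal F$-definable function sends $X^m$ onto a cofinite subset of $F$, equivalently that $X \subseteq g(C^m)$ for some $\cal F$-definable $g : F^m \to F^n$. Granting this, the direction $\dim_2(X) = 0 \Rightarrow \ddim(X) = 0$ goes exactly as in Lemma \ref{0d}: we obtain $\ddim(X) \leq \ddim(g(C^m)) \leq \ddim(C^m) = 0$ from properties (3), (4) and (6) of Fact \ref{dimprop}.

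For the converse I would first prove the DCF$_0$-analogue of Lemma \ref{0dC}: a nonzero $p(x) \in F\{x\}$ cannot vanish on a cofinite subset of $F$. No axiom scheme like $(\dagger)$ is needed here, because in a $|F|^+$-saturated elementary extension $\CK^* = \la \cal F^*, \delta\ra$ of $\CK$ there are infinitely many elements of $F^*$ differentially transcendental over $F$, hence infinitely many on which $p$ does not vanish; by elementarity, $F \setminus V(p)$ is also infinite, so $V(p)$ is not cofinite in $F$. Now suppose $\ddim(X) = 0$ but $\dim_2(X) > 0$. By the preceding characterisation, $X$ is not small, so some $\cal F$-definable $f : F^{nm} \to F$ has $f(X^m)$ cofinite in $F$. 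Properties (4) and (6) of Fact \ref{dimprop} give $\ddim(f(X^m)) \leq \ddim(X^m) = 0$, and a standard compactness argument in the integral domain $F\{x\}$ produces a single nonzero $p \in F\{x\}$ vanishing on all of $f(X^m)$, contradicting what we just proved.

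The main obstacle I anticipate is pinning down the exact pair-of-ACF analogue of Fact \ref{fact-small} inside \cite{vdda}, and in particular the correct formulation of ``largeness'' in a pair of strongly minimal structures: it should say that some $\cal F$-definable image of $X^m$ in $F$ is cofinite, playing the role of ``contains an open interval'' in the o-minimal case. Once this reformulation and its equivalence with $\dim_2(X) > 0$ are in hand, the rest of the proof is a word-for-word translation of the CODF argument in \S\ref{mainproofs}.
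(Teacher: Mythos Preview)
Your plan is workable, but the paper takes a shorter path that completely sidesteps the obstacle you flag. Rather than seeking a pair-of-ACF analogue of Fact~\ref{fact-small}, the paper invokes the \emph{dichotomy} from \cite[Proposition~1.1]{vdda}: for $X\subseteq F$ definable in $\cal M$, either $\dim_2(X)\le 0$ or $\dim_2(F\setminus X)\le 0$. Given this, once you know $\dim_2(X)=0\Rightarrow\ddim(X)=0$, the converse is automatic: if $\dim_2(X)=1$ then $\dim_2(F\setminus X)=0$, hence $\ddim(F\setminus X)=0$, and property~(3) forces $\ddim(X)=1$. So there is no need to formulate ``largeness'' for pairs of ACFs, nor to prove the analogue of Lemma~\ref{0dC}.

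For the surviving direction $\dim_2(X)=0\Rightarrow\ddim(X)=0$, the paper also argues differently from you. Instead of passing through $C$-internality (which in ACF would require some care, since one gets finite-to-one correspondences rather than functions out of algebraic closure), it works directly at the level of pregeometries: take a saturated elementary extension $\CK^*$ of $\CK$ and note that $\cal M^*=\la\cal F^*,C^*\ra$ is then a saturated extension of $\cal M$. For $a\in X^*$, the hypothesis $\dim_2(X)=0$ gives $a\in\cl_F(\emptyset)=\acl^{\cal F^*}(F\cup C^*)=F(C^*)^{\mathrm{alg}}$; since constants are differentially algebraic and field-algebraic dependence implies differential-algebraic dependence, $a\in\cld_F(\emptyset)$, so $\ddim(X)=0$. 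Your internality route and your compactness-plus-Lemma~\ref{0dC} route both become unnecessary; what you identified as the main obstacle simply does not arise.
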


The key ingredient in the proof is the following dichotomy result established in \cite[Proposition 1.1]{vdda}

\begin{fact}\label{dich}
For $X\subseteq F$ definable in $\cal M$, we have
$$\dim_2(X)\leq 0\quad \text{ or }\quad \dim_2(F\setminus X)\leq 0.$$
\end{fact}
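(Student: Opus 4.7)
The plan is to prove the dichotomy by identifying a \emph{unique} generic ``large'' complete type over $F$ in the sufficiently saturated pair $\cal M^*=\la \cal F^*,C^*\ra$, and then observing that every $\cal M$-definable subset of $F$ must either contain this type or avoid it.

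I would call $a\in F^*$ \emph{$F$-large} if $a\notin \acl(F\cup C^*)$; equivalently $\cl_F$-$\dim(a)=1$. The heart of the argument is the claim that any two $F$-large elements $a,a'$ of $\cal M^*$ have the same complete type over $F$ in the pair language. The preparatory computation that makes this feasible is $F(a)\cap C^*=C$: if $x=p(a)/q(a)\in F(a)\cap C^*$ with $p,q\in F[y]$ coprime, then the polynomial $q(y)x-p(y)\in (F\cdot C^*)[y]$ would vanish at $a$ unless identically zero, and since $a$ is transcendental over $F\cdot C^*$ this forces $p/q$ to be a constant in $F$, whence $x\in F\cap C^*=C$. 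Consequently the field isomorphism $a\mapsto a'$ fixing $F$ extends to an isomorphism of pair-substructures $\la F(a),C\ra\to \la F(a'),C\ra$.

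To complete the uniqueness claim, I would promote this partial pair-isomorphism to an automorphism of $\cal M^*$ fixing $F$ pointwise. Since the theory of elementary pairs of algebraically closed fields of characteristic zero is complete and stable (see \cite{vdda} and the foundational work of Keisler and Poizat), and $\cal M^*$ is $|F|^+$-saturated, a back-and-forth argument applies: adjoin elements one at a time, describe their types via field-theoretic data over the current domain together with their relations to $C^*$, and realize the corresponding types on the other side by saturation. The same kind of computation as above handles the new constants introduced at each stage. This yields $\tp_{\cal M^*}(a/F)=\tp_{\cal M^*}(a'/F)$.

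Granting uniqueness of the $F$-large type, the dichotomy is immediate. Let $X\sub F$ be definable in $\cal M$ by a formula $\varphi(y)$ over $F$, and fix a realization $a$ of the unique $F$-large type in $\cal M^*$. If $\cal M^*\models \varphi(a)$, then $a\in X^*$ witnesses $\dim_2(X)\ge 1$, while every point of $(F\sm X)^*$ must lie in $\acl(F\cup C^*)$ (otherwise it would be $F$-large, hence by uniqueness realize the same type as $a$ and satisfy $\varphi$), giving $\dim_2(F\sm X)\le 0$. The case $\cal M^*\models\neg\varphi(a)$ is symmetric. I expect the main obstacle to be the back-and-forth step in paragraph three: the extension must respect the pair language at every stage rather than merely the underlying ACF, and it is the calibration $F(a)\cap C^*=C$ that keeps the constant-field structure of the domain fixed as we enlarge.
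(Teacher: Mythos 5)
The paper itself does not prove this statement --- it is quoted as \cite[Proposition 1.1]{vdda} --- so any self-contained argument is necessarily ``different from the paper''. Your strategy (uniqueness of the generic 1-type over $F$ of an element outside $\acl(F\cup C^*)$, followed by the observation that a definable set and its complement cannot both realize that type) is the right one, and your endgame paragraph is correct. The computation $F(a)\cap C^*=C$ is also correct, using $F\cap C^*=C$ from $\cal M\preccurlyeq\cal M^*$.

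The gap is in your third paragraph, which is where all the content of the statement actually lives. Having a pair-substructure isomorphism $\la F(a),C\ra\to\la F(a'),C\ra$ does not by itself make the map elementary in $\cal M^*$, and ``completeness and stability of the theory of pairs'' is not enough to run the back-and-forth over the parameter set $F$: you need the description of types in belles paires (Poizat), namely that the $L_P$-type of a tuple over a suitable base is determined by its field-type together with its algebraic relations to $C^*$, \textbf{provided the relevant sets are $P$-independent}, i.e.\ algebraically independent from $C^*$ over their intersection with $C^*$. Trivial intersection with $C^*$ is strictly weaker than this independence for pairs of algebraically closed fields, so the invariant you verify ($F(a)\cap C^*=C$) is not the one that determines the type. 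To close the gap you must (i) check that $F$ itself is algebraically independent from $C^*$ over $C$ --- this follows from $\cal M\preccurlyeq\cal M^*$, since algebraic independence over the predicate is expressible by a partial type in the pair language; (ii) deduce by transitivity that $F(a)$ is algebraically independent from $C^*$ over $C$ (using $a\notin\acl(F\cup C^*)$), and hence that $\acl(F\cup\{a\})\cap C^*=C$ and not merely $F(a)\cap C^*=C$; and (iii) invoke (or reprove) the statement that matching this data yields equality of $L_P$-types. Steps (i)--(ii) are routine once stated, but step (iii) is essentially Keisler's completeness theorem for pairs relativized to parameters; as written you are asserting it rather than proving it. Alternatively, you could simply cite the known analysis of types in pairs of algebraically closed fields, which is in effect what the paper does by quoting \cite{vdda}.
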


We now prove Theorem \ref{dcfcase}. As $\dim_2$ and $\ddim$ are dimension functions, it suffices to show that they agree on subsets of $F$ definable in $\cal M$ (see (7) of Fact~\ref{dimprop}). As $\dim_2(F)=1=\ddim(F)$, it suffices to show that for definable $X\subseteq F$ we have
$$\dim_2(X)=0 \; \iff \; \ddim(X)=0.$$
By the dichotomy theorem (Fact~\ref{dich}), it actually suffices to show left-to-right in the above equivalence. So assume $\dim_2(X)=0$. Let $\CK^*=\la \cal F^*,\delta \ra$ be a $|F|^+$-saturated elementary extension of $\CK$. We must show that for any $a\in X^*=X(F^*)$ we have $\cld_F$-$\dim(a)=0$ (where the latter is taken with respect to the $\cld_F$-pregeometry of $\CK^*$). Let $C^*$ be the constants of $\CK^*$. Since $\cal M^*:=\la \cal F^*, C^*\ra$ is a $|F|^+$-saturated elementary extension of $\cal M$ and $\dim_2(X)=0$, we get $\cl_F$-$\dim(a)=0$ (where the latter is taken with respect to the $\cl_F$-pregeometry of $\cal M^*$). We thus have
$$a\in \cl_F(\emptyset)= \acl^{\cal F^*}(F, C^*)=F(C^*)^{\text{alg}}\subset F^*.$$
As all elements in $F(C^*)^{\text{alg}}$ are differentially algebraic over $F$, we get that $a\in \cld_F(\emptyset)$. Thus, $\cld_F$-$\dim(a)=0$, and so $\ddim(X)=0$.

\medskip

\subsection{The case of transseries.} Let $\CK=\la \mathbb T, \delta\ra$ be the differential field of transseries as constructed in \cite{Asch}. The field of constants in this case equals $\mathbb R$, and the reduct $\cal M=\la \mathbb T, \mathbb R\ra$ is a tame pair in the sense of \cite{dl}. The $\delta$-dimension for definable sets in $\CK$ was studied in \cite{adh} and was shown to be a dimension function. On the other hand, in \cite{vdda} the $\la \mathbb T, \mathbb R\ra$ analogue of the $\dim_2$ dimension was studied in \cite{vdda} and was shown to be a dimension function. Furthermore, putting together \cite[Proposition 4.1]{adh} and \cite[Theorem 1.4]{vdda}, we have:

\begin{fact}\label{trans}
Let $X$ be a definable set in $\cal M$. Then
$$\dim_2(X)=0 \; \iff \; X \text{is discrete } \; \iff\; \ddim(X)=0.$$
\end{fact}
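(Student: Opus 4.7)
The plan is to prove Fact~\ref{trans} by establishing the two equivalences separately, each of which relies on a known structural result about one of the two extra structures on $\mathbb T$ (the tame pair structure and the derivation). The overall template mimics the strategy of Theorem~\ref{main} and Theorem~\ref{dcfcase}: reduce to a purely topological intermediate notion (``discreteness'') that is accessible from both dimensions.

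For the equivalence $\dim_2(X)=0 \Longleftrightarrow X\text{ discrete}$, the plan is to invoke the tame pair analysis of \cite{vdda}. First, I would note that $\dim_2$ is localized at the pregeometry $\cl_B(A)=\dcl(A\cup B\cup \mathbb R)$, so $\dim_2(X)=0$ means that after passing to a sufficiently saturated elementary extension $\cal M^*=\la \mathbb T^*,\mathbb R^*\ra$, every point of $X^*$ lies in $\dcl(F,\mathbb R^*)$. In the tame pair setting, \cite[Theorem 1.4]{vdda} translates this purely pregeometric condition into the statement that $X$ is \emph{discrete} with respect to the order topology on $\mathbb T$. The converse direction is the easier one: a set containing a non-trivial cluster point cannot be contained in the $\dcl$-image of the small algebraically closed subfield $\mathbb R^*$, since $\mathcal R$-definable images of products of such sets would then contain an interval, contradicting \cite[Theorem 1.4]{vdda}.

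For the equivalence $X\text{ discrete}\Longleftrightarrow\ddim(X)=0$, the plan is to invoke \cite[Proposition 4.1]{adh}. The left-to-right direction is the substantive content of that proposition: in $\mathbb T$, any discrete $\cal M$-definable set consists of points that are differentially algebraic over the defining parameters (this uses specifically the asymptotic/Hardy-field-like features of transseries which prevent transcendentally many isolated solutions). Conversely, if $\ddim(X)=0$, then after saturating, every point of $X^*$ is differentially algebraic over the parameters; combined with the tame pair argument from the previous paragraph one obtains that $X$ cannot contain an interval or any cluster point, hence is discrete.

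Combining the two equivalences yields Fact~\ref{trans}. The main obstacle in this program is the direction $X\text{ discrete}\Rightarrow \ddim(X)=0$: one has only a topological hypothesis and must extract a differential-algebraic consequence. This is precisely what makes the transseries case special compared to the CODF case examined in Section~\ref{example}, where discrete definable sets need not be differentially algebraic (for instance the Rosenlicht solution sets). Without the structural input of \cite[Proposition 4.1]{adh}, which relies on the fine Hardy-field structure of $\mathbb T$, there is no direct route from order-topological discreteness to differential algebraicity, so my proof plan genuinely depends on citing this result rather than reconstructing it.
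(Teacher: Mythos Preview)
Your approach is essentially identical to the paper's: the paper does not prove Fact~\ref{trans} directly but simply records it as the conjunction of \cite[Theorem~1.4]{vdda} (for the equivalence $\dim_2(X)=0 \Leftrightarrow X$ discrete) and \cite[Proposition~4.1]{adh} (for the equivalence $X$ discrete $\Leftrightarrow \ddim(X)=0$), exactly as you propose. Aside from a couple of minor slips (writing $F$ where you mean $\mathbb{T}$, and calling $\mathbb{R}^*$ algebraically closed rather than real closed), your elaboration faithfully unpacks what those two citations deliver.
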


The analogue of Theorem \ref{main} is

\begin{theorem}
Let $\CK=\la \mathbb T,\delta \ra$ be the differential field of transseries. Let $X$ be a set definable in the pair $\cal M=\la \mathbb T, \mathbb R\ra$. Then
$$\dim_2(X)=\ddim(X).$$
\end{theorem}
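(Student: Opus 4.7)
The plan is to follow the template of the proof of Theorem \ref{dcfcase} essentially verbatim, with Fact \ref{trans} playing the role that Fact \ref{dich} (together with the differentially-algebraic-over-$F$ argument) played in the DCF$_0$ case. Two preliminary ingredients are already in place: $\ddim$ on sets definable in $\CK$ is a dimension function by \cite[Proposition 4.1]{adh}, and $\dim_2$ on sets definable in $\cal M = \la \mathbb T, \mathbb R\ra$ is a dimension function by \cite{vdda}. Since $\cal M$ is a reduct of $\CK$, every set definable in $\cal M$ is also definable in $\CK$, so both dimensions make sense on the common class of sets definable in $\cal M$.

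First I would invoke property (7) of Fact \ref{dimprop}: any dimension function is completely determined by its values on definable subsets of $\mathbb T$. Hence it suffices to show $\dim_2(X) = \ddim(X)$ whenever $X \subseteq \mathbb T$ is definable in $\cal M$. By property (2) we have $\dim_2(\mathbb T) = 1 = \ddim(\mathbb T)$, so any nonempty such $X$ has both dimensions in $\{0,1\}$; consequently it is enough to verify the equivalence
$$\dim_2(X) = 0 \;\iff\; \ddim(X) = 0.$$
But this equivalence is precisely the content of Fact \ref{trans}, which passes through the intermediate condition that $X$ be discrete.

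I do not expect any real obstacle: the substantive work has been packaged into Fact \ref{trans} (citing \cite[Proposition 4.1]{adh} and \cite[Theorem 1.4]{vdda}), and the reduction to definable subsets of $\mathbb T$ is a formal consequence of both dimensions being dimension functions. The only point warranting a brief mention is that $\dim_2$ and $\ddim$, though defined via saturated elementary extensions of $\cal M$ and $\CK$ respectively, both restrict to the same notion of ``zero-dimensional definable subset of $\mathbb T$'' that Fact \ref{trans} addresses; this is immediate from the definitions and requires no separate argument.
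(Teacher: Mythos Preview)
Your proposal is correct and matches the paper's own proof essentially verbatim: the paper also reduces (by reference to the DCF$_0$ argument, i.e.\ property (7) and the $\{0,1\}$-valuedness on subsets of $\mathbb T$) to the equivalence $\dim_2(X)=0 \iff \ddim(X)=0$ for $X\subseteq\mathbb T$ definable in $\cal M$, and then cites Fact~\ref{trans}. You have simply spelled out in more detail what the paper compresses into the phrase ``As in \S\ref{dcf}.''
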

\begin{proof}
As in \S\ref{dcf}, it suffices to show that for any $X\subseteq \mathbb T$ definable in $\cal M$ we have $\dim_2(X)=0$ if and only if $\ddim(X)=0$. But this equivalence is given Fact~\ref{trans}.
\end{proof}


\end{document}